\newcommand{\CC}{\mathbb{C}}
\newcommand{\RR}{\mathbb{R}}
\newcommand{\ZZ}{\mathbb{Z}}
\renewcommand{\SS}{\mathbb{S}}
\newcommand{\cQ}{\mathcal{Q}}
\newcommand{\sL}{\mathscr{L}}
\newcommand{\sM}{\mathscr{M}}
\DeclareMathOperator{\tr}{tr}
\DeclareMathOperator{\area}{area}
\DeclareMathOperator{\length}{length}
\DeclareMathOperator{\divg}{div}
\newcommand{\closure}[1]{\overline{#1}}
\numberwithin{equation}{section}
\theoremstyle{definition} 
\theoremstyle{definition} \newtheorem*{rema}{Remark}
\theoremstyle{definition} \newtheorem{theo}[equation]{Theorem}
\theoremstyle{definition} \newtheorem*{theo*}{Theorem}
\theoremstyle{definition} \newtheorem{prop}[equation]{Proposition}
\theoremstyle{definition} \newtheorem{coro}[equation]{Corollary}
\theoremstyle{definition} \newtheorem*{coro*}{Corollary}
\theoremstyle{definition} \newtheorem{lemm}[equation]{Lemma}
\theoremstyle{definition} \newtheorem*{fact}{Fact}
\theoremstyle{definition} 
\theoremstyle{definition} 
\theoremstyle{definition} 
\theoremstyle{definition} 
\title[On the Bartnik mass of apparent horizons]{On the Bartnik mass of apparent horizons}
\author{Christos Mantoulidis}
\address{Department of Mathematics \\
	Stanford University \\
	Stanford, CA 94305}
\email{c.mantoulidis@math.stanford.edu}
\author{Richard Schoen}
\address{Department of Mathematics \\
	University of California \\
	Irvine, CA 92617}
\thanks{The second author was partially supported by NSF grant DMS-1404966}
\email{rschoen@math.uci.edu}
\begin{document}

\maketitle

\begin{abstract}
In this paper we characterize the intrinsic geometry of apparent horizons (outermost
marginally outer trapped surfaces) in asymptotically flat spacetimes; that is, the Riemannian metrics
on the two sphere which can arise. Furthermore we determine the minimal ADM mass of a 
spacetime containing such an apparent horizon. The results are conveniently formulated
in terms of the quasi-local mass introduced by Bartnik \cite{bartnik-quasilocal-mass} in
1989. The Hawking mass provides a lower bound for Bartnik's quasilocal mass on apparent horizons by way of Penrose's conjecture on time symmetric slices, proven in 1997 by Huisken and Ilmanen \cite{huisken-ilmanen-riemannian-penrose-inequality} and in full generality in 1999 by Bray \cite{bray-riemannian-penrose-inequality}. We compute Bartnik's mass for all non-degenerate apparent horizons and show that it coincides with the Hawking mass. As a corollary we disprove a conjecture due to Gibbons in the spirit of Thorne's hoop conjecture \cite{gibbons-hoop-conjecture}, and construct a new large class of examples of apparent horizons with the
integral of the negative part of the Gauss curvature arbitrarily large.
\end{abstract}

\section*{Statement of results}

Hawking's theorem on the topology of black holes asserts that cross sections of the event horizon in four-dimensional asymptotically flat stationary black hole spacetimes obeying the dominant energy condition are topologically 2-spheres \cite{hawking-black-holes}. The result extends to apparent horizons in black hole spacetimes that are not necessarily stationary. In the time symmetric setting, which we adopt in this paper, these horizons are stable minimal spheres \cite{huisken-ilmanen-riemannian-penrose-inequality} in the initial data set $M$. If $\Sigma$ is such a surface, the second variation of area being non-negative for a normal variation with lapse function $\varphi$ is the condition
\[ \frac{1}{2}\int_\Sigma (R_M-R_\Sigma+\|S\|^2)\varphi^2\ dA\leq \int_\Sigma \|\nabla \varphi\|^2\ dA
\]
where $R_M$ and $R_\Sigma$ are the scalar curvatures of the three metric and the two metric
respectively, and $S$ denotes the second fundamental form of $\Sigma$ in $M$. Since $R_M\geq 0$
and the Gauss curvature $K$ is equal to $\frac{1}{2}R_\Sigma$, it follows that the second order linear elliptic operator $-\Delta + K$ is a non-negative
operator on the apparent horizon $\Sigma$. We denote its first eigenvalue by $\lambda_1(-\Delta+K)$, so that we have $\lambda_1(-\Delta+K)\geq 0$.

Let $\sM_+$ denote the class of metrics $g$ on $\SS^2$ for which $\lambda_1(-\Delta_g + K_g) > 0$. In Sections \ref{sec:collar.extensions} and \ref{sec:gluing.schwarzschild} we compute the Bartnik mass of all horizons in $\sM_+$ by constructing admissible extensions whose ADM mass is arbitrarily close to the optimal value which is the Hawking mass in view of the Riemannian Penrose inequality \cite{bray-riemannian-penrose-inequality}, \cite{huisken-ilmanen-riemannian-penrose-inequality}. The extension theorem is as follows.

\begin{theo*}
Let $g \in \sM_+$. For any $m > 0$ such that $16 \, \pi \, m^2 > \area(g)$ there exists an asymptotically flat 3-manifold $M^3$ with non-negative scalar curvature such that
\begin{enumerate}[(i)]
\item $\partial M^3$ is isometric to $(\SS^2, g)$ and minimal,
\item $M^3$ is isometric to a mass-$m$ Schwarzschild metric outside a compact set, and
\item $M^3$ is foliated by mean convex spheres that eventually coincide with standard Schwarzschild coordinate spheres.
\end{enumerate}
\end{theo*}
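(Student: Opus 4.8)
I would build $M^3$ from three pieces glued along round $2$-spheres: a \emph{collar} $[0,T]\times\SS^2$ with inner boundary $(\SS^2,g)$ and a round outer boundary of small mean curvature; a rotationally symmetric \emph{bridge}; and the exterior of a mass-$m$ Schwarzschild region. Since hypothesis and conclusion are scale invariant, I may assume $\area(g)=4\pi$, so the hypothesis reads $m>\tfrac12$; since $g$ is minimal, the Hawking mass of the horizon is $\sqrt{\area(g)/16\pi}=\tfrac12$, and it is exactly the strictness $m>\tfrac12$ that leaves room to raise the (Hawking) mass to $m$ along the extension. By Moser's theorem I may also replace $g$ by an isometric copy carrying the round volume form, a convenience for the next step.

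\emph{Path lemma.} I join $g$ to a round metric by a path $\{g(t)\}$ lying in $\sM_+$, constant near its endpoints, with $\dot g(0)=\ddot g(0)=0$ and $\tr_{g(t)}\dot g(t)$ having arbitrarily small negative part. Letting $\phi_0>0$ be the principal eigenfunction, $-\Delta_g\phi_0+K_g\phi_0=\lambda_1\phi_0$ with $\lambda_1>0$, one computes $K_{\phi_0^2 g}=\phi_0^{-2}(\lambda_1+\phi_0^{-2}|\nabla\phi_0|_g^2)>0$, and along $g_s=\phi_0^{2s}g$ the positive function $\phi_0^{1-s}$ is a supersolution of $-\Delta_{g_s}+K_{g_s}$, so $\lambda_1(-\Delta_{g_s}+K_{g_s})>0$ throughout; this reaches a positive-curvature metric. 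From there, normalized Ricci flow on $\SS^2$ preserves positivity of curvature and converges to a round metric; a constant rescaling fixes the area, and reparametrizing with flat ends gives the stated regularity.

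\emph{Collar.} On $[0,T]\times\SS^2$ put $\gamma=\varphi^2\,dt^2+u^2\,g(t)$ with $u=u(t)$, $u(0)=1$, $\dot u(0)=0$, and — the key point — $\varphi=\varphi(t,\cdot)$ the first eigenfunction of $-\Delta_{g(t)}+K_{g(t)}$, which is positive and smooth in $t$ since that eigenvalue is simple. The Gauss and Riccati equations give
\[
R_\gamma=\frac{2}{u^2}\Bigl(K_{g(t)}-\frac{\Delta_{g(t)}\varphi}{\varphi}\Bigr)-\frac{2}{\varphi}\,\partial_t H-\frac32 H^2-\frac{1}{4\varphi^2}\bigl|\mathring{\dot g}(t)\bigr|^2_{g(t)},
\]
and the first term equals \emph{exactly} $2u^{-2}\lambda_1(-\Delta_{g(t)}+K_{g(t)})$, bounded below by a positive constant by compactness of the path in $\sM_+$. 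Taking $g(t)=\bar g(t/T)$ with $T$ large makes $\dot g,\ddot g,\dot u$ — hence $H$ and $\partial_t H$ — of size $O(1/T)$ and $O(1/T^2)$, so the last three terms are $O(1/T^2)$ uniformly and $R_\gamma>0$ for $T$ large. The slice $\{0\}\times\SS^2$ is isometric to $(\SS^2,g)$ and, since $\dot u(0)=\dot g(0)=0$, minimal; letting $u$ be strictly increasing for $t>0$, growing fast enough to dominate any negative part of $\tr_{g(t)}\dot g(t)$, keeps $H\ge0$ with $H>0$ off the inner boundary, so the slices $\{t\}\times\SS^2$ foliate by mean convex spheres; and near $t=T$, where $g(t)$ is round and $\varphi$ constant, $\gamma$ is rotationally symmetric, with outer boundary a round sphere of area close to $4\pi$ and mean curvature $O(1/T)$.

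\emph{Gluing and obstacle.} Such an outer sphere has ``Hawking mass'' near $\tfrac12<m$, so I continue with a rotationally symmetric metric $d\tau^2+r(\tau)^2 g_{\SS^2}$, $r$ increasing, along which $\tfrac r2(1-r_\tau^2)$ rises monotonically from $\approx\tfrac12$ up to $m$ and the profile then matches Schwarzschild, so that past some radius $\rho>2m$ the metric is exactly the exterior mass-$m$ Schwarzschild metric; for rotationally symmetric metrics $R\ge0$ is \emph{equivalent} to monotonicity of $\tfrac r2(1-r_\tau^2)$, so this is possible precisely because $16\pi m^2>\area(g)$. Matching the pieces along round spheres (the two sides agreeing to infinite order) yields the asymptotically flat $M^3$ with $R_M\ge0$, minimal boundary $(\SS^2,g)$, Schwarzschild of mass $m$ outside a compact set, and the mean convex foliation $\{t\}\times\SS^2$ continued by the Schwarzschild coordinate spheres. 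I expect the collar to be the main obstacle: a naive product or constant-lapse collar fails because $K_g$ need not be $\ge0$, and the essential idea is that taking the lapse to be the principal eigenfunction of $-\Delta_{g(t)}+K_{g(t)}$ converts the pointwise requirement $R_\gamma\ge0$ into the positivity — already known, and uniform along a compact path — of $\lambda_1(-\Delta_{g(t)}+K_{g(t)})$, everything else being absorbed by slowing the path; constructing the path all the way to a round metric inside $\sM_+$ with the control needed on $\tr_{g(t)}\dot g(t)$ is the remaining delicate point.
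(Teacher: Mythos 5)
Your core strategy is the paper's: the decisive idea of taking the collar lapse to be the first eigenfunction of $-\Delta_{g(t)}+K_{g(t)}$, so that the leading term of $R_\gamma$ is exactly $2u^{-2}\lambda_1(g(t))$ while every other term is made small by slowing the path (equivalent, after reparametrization, to the paper's large constant $A$ in the lapse of Lemma \ref{collar.lemma}), is precisely the collar construction here, and your warped-product formula for $R_\gamma$ is correct. Your outer step is a legitimate alternative to Sections' \ref{sec:gluing.schwarzschild} gluing and bending lemmas: instead of mollifying a $C^{1,1}$ interpolation of profile functions subject to $f''\in\Omega(f,f')$ and pre-bending Schwarzschild to positive scalar curvature, you use that for $d\tau^2+r(\tau)^2g_*$ with $r_\tau>0$ nonnegative scalar curvature is equivalent to monotonicity of $\tfrac r2(1-r_\tau^2)$, and you prescribe a mass profile rising from $\approx\sqrt{\area(g)/16\pi}$ to $m$ with jets matched at both junctions; this uses $16\pi m^2>\area(g)$ in the same way and is fine.

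The genuine gap, which you flag yourself, is the path lemma. The path you actually build — the conformal family $\phi_0^{2s}g$ to positive curvature (your supersolution computation is correct) followed by normalized Ricci flow to a round metric — stays in $\sM_+$, but gives no control on $(\tr_{g(t)}\dot g(t))_-$: along $\phi_0^{2s}g$ one has $\tr_{g_s}\dot g_s=4\log\phi_0$, negative wherever $\phi_0<1$, and along normalized Ricci flow it is $2(r-R)$. Slowing the parametrization makes this pointwise $O(1/T)$ but leaves the integral $\int_0^T\sup_x(\tr_{g(t)}\dot g(t))_-\,dt$ a fixed positive constant, and that integral is what matters: mean convexity of the slices forces $4\dot u/u\geq(\tr_{g(t)}\dot g(t))_-$, hence $\log u(T)\gtrsim\tfrac14\int(\tr\dot g)_-$, so the outer round sphere has area bounded away from $\area(g)$ by a factor independent of $T$. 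Its quasilocal mass then need not lie below $m$ when $m$ is close to the optimal value $\sqrt{\area(g)/16\pi}$ — exactly the regime the theorem (and the Bartnik mass computation) requires — and your bridge cannot be built. The paper closes this by making the area form \emph{exactly} constant along the path (Lemma \ref{extension.lemma}): it uses the simpler linear conformal path $e^{2tw}g_*$, which stays in $\sM_+$ because the associated quadratic form is affine in $t$ (no Ricci flow needed), rescales to fix total area, and then pulls back by the flow of vector fields $X_t$ with $\divg_{h(t)}X_t$ prescribed (solving $\Delta_{h(t)}\psi$ equal to the mean-zero right-hand side), so that $\tr_{g(t)}\dot g(t)\equiv0$; since $\sM_+$ is invariant under diffeomorphisms and dilations the corrected path remains in $\sM_+$. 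With that Moser-type correction inserted, your collar and your mass-profile gluing go through as described.
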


There is a diagram, Figure \ref{figure}, following Lemma \ref{collar.lemma} which illustrates the geometry
of our extension. The extension procedure is explicit and allows us to keep track of the mass accurately. In Section \ref{sec:collar.extensions} we use variational methods to construct a scalar positive collar extension for $g$. Section \ref{sec:gluing.schwarzschild} gives a gluing procedure for
attaching this collar extension to a Schwarzschild exterior region with mass arbitrarily close to the optimal value. This construction confirms the fact that symmetry at infinity, even coincidence with Schwarzschild, is not a rigid condition \cite{corvino-scalar-curvature-deformation}. More importantly, it follows that:

\begin{enumerate}[(a)]
\item For every $g \in \sM_+$ Bartnik's quasilocal mass is $m_B(\SS^2, g, H = 0) = (\area(g)/16 \pi)^{1/2}$ and is in particular a smooth function of the boundary data, i.e., the metric.
\item The rigidity case of the Riemannian Penrose inequality in the presence of an apparent horizon says that the Hawking mass of the horizon is only attained as the ADM mass in the trivial case of Schwarzschild metrics. It follows, a posteriori, that with the known exception of Schwarzschild there are no exact minimizers that attain the optimal mass $m_B(\SS^2, g, H = 0)$, $g \in \sM_+$; constructing a minimizing sequence of extensions that are precisely Schwarzschild at infinity is, in this sense, optimal.
\item While we only construct initial data sets in this paper, we can construct a local spacetime containing this data by filling the interior to construct a complete initial data set with scalar
curvature non-negative, and with the property
that there is a single apparent horizon. We can then treat the region where the scalar curvature
is positive as a fluid body and consider the Cauchy problem for the
Einstein equations coupled with a perfect fluid. We choose a state function of the form
$p=k\rho^\gamma$ with $\gamma>1$ and $k$ positive. We take the initial density to
be a multiple of the scalar curvature as dictated by the scalar constraint equation, and we 
take the initial three velocity of the fluid to be zero. We can then
use the existence theorem of A. Rendall \cite{rendall-cauchy-problem} for the Cauchy problem to construct a maximal smooth spacetime extension. We observe that in our construction
the function $\rho$ decays exponentially at the fluid boundary, so the auxiliary function $w$ introduced in \cite{rendall-cauchy-problem} is smooth with compact support. We also note that
if $k$ is chosen small enough we will have $p\leq \rho$ in a neighorhood of the initial
slice, and so the dominant energy condition is satisfied.
We expect that after a relatively short time there will be an apparent horizon which is near
a round metric on $\SS^2$ since the initial data set contains approximately round two
spheres which are nearly apparent horizons (see Figure \ref{figure}).
We do not know if there is a modification of our construction which produces vacuum initial data. 
\end{enumerate}

The methods in Sections \ref{sec:collar.extensions} and \ref{sec:gluing.schwarzschild} give a complete intrinsic characterization of all non-degenerate (in the sense of second variation of area) apparent horizons, showing that they are in bijective correspondence with metrics in $\sM_+$. This result was obtained by B. Smith  \cite{smith-black-holes-prescribed-full} who also was able to specify the second fundamental
form of the horizon. The methods of \cite{smith-black-holes-prescribed-full} do not give
control on the ADM mass of the extension as we do in this paper. The question of characterizing degenerate horizons---non-generic examples that are totally geodesic in a scalar flat region and satisfy $\lambda_1(-\Delta_g + K_g) = 0$---remains open.

The $\sM_+$ characterization is analytically complete but the possible geometry of
such metrics is harder to determine. The examples that have been known to date to arise as horizons all seem to have positive curvature. In Section \ref{sec:horizon.examples} we study the relevant differential operator $-\Delta + K$ and construct a class of horizons that allows a large amount of negative curvature. We also show that this class is dense in the appropriate topology.

\begin{theo*}
The set $\sM_+$ is relatively open in every pointwise conformal class of metrics, with respect to the uniform $C^1$ topology. Furthermore, for every $c > 0$ the subset of $\sM_+$ consisting of metrics $g$ with $\int_{\SS^2} (K_g)_- \, dA_g \geq c$ is $C^1$ dense in $\sM_+$ where $(K_g)_-=\max\{0,-K_g\}$.
\end{theo*}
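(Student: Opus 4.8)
The plan is to treat the two assertions separately, using the openness statement as an ingredient in the density statement.

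For openness, I would fix $g \in \sM_+$ and parametrize its conformal class by $\tilde g = e^{2u} g$, $u \in C^\infty(\SS^2)$; since $g$ is held fixed, $\tilde g$ being $C^1$-close to $g$ is the same as $u$ being $C^1$-small, which controls $\|\nabla_g u\|_{L^\infty}$. Because the Dirichlet integral is conformally invariant in dimension two and $K_{\tilde g}\, dA_{\tilde g} = (K_g - \Delta_g u)\, dA_g$, the quadratic form $\cQ_h(\phi) := \int_{\SS^2}(|\nabla_h \phi|_h^2 + K_h \phi^2)\, dA_h$ of $-\Delta_h + K_h$ (whose positivity on $H^1 \setminus \{0\}$ is exactly the condition $h \in \sM_+$) satisfies, after one integration by parts,
\[
\cQ_{\tilde g}(\phi) = \cQ_g(\phi) + 2 \int_{\SS^2} \phi\, \langle \nabla_g u, \nabla_g \phi \rangle_g \, dA_g .
\]
The key observation is that the correction term involves only the \emph{first} derivatives of $u$ and is bounded by $\|\nabla_g u\|_{L^\infty}\|\phi\|_{H^1(g)}^2$. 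I would then combine $\lambda_1(-\Delta_g + K_g) > 0$ with the trivial Gårding bound $\cQ_g(\phi) \geq \|\nabla_g\phi\|_{L^2}^2 - \|K_g\|_{L^\infty}\|\phi\|_{L^2}^2$: a suitable convex combination gives a coercivity estimate $\cQ_g(\phi) \geq \delta(g)\,\|\phi\|_{H^1(g)}^2$. Hence $\cQ_{\tilde g} \geq (\delta(g) - \|\nabla_g u\|_{L^\infty})\,\|\cdot\|_{H^1(g)}^2 > 0$ once $\tilde g$ is $C^1$-close enough to $g$, proving relative openness in the conformal class.

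For density, given $g \in \sM_+$, $c > 0$, and $\eta > 0$, I would build $g' = e^{2u} g$ with $\|g' - g\|_{C^1} < \eta$, $g' \in \sM_+$, and $\int_{\SS^2}(K_{g'})_- \, dA_{g'} \geq c$ by a small, highly oscillatory conformal perturbation. Fix a coordinate chart containing a square, a nonnegative cutoff $a$ equal to $1$ on a smaller square well inside it, and set
\[
u = u_{\epsilon,\delta} := \epsilon\, \delta\, a(x)\, \sin(x^1/\delta),
\]
extended by zero. Then $\|u\|_{C^0} \leq \epsilon\delta$ and $\|\nabla u\|_{C^0} \leq C(g,a)\,\epsilon$ \emph{uniformly in} $\delta \in (0,1]$ — the amplitude $\delta$ is chosen precisely to cancel the $\delta^{-1}$ from differentiating $\sin(x^1/\delta)$ — so $g' - g$ is $O(\epsilon)$ in $C^1$. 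This cancellation does not happen at second order: $\Delta_g u = -\frac{\epsilon}{\delta}\, a\, |\nabla_g x^1|_g^2 \sin(x^1/\delta) + O(\epsilon)$, with error uniform in $\delta$. Therefore, on the smaller square and on the set where $\sin(x^1/\delta) \leq -\tfrac12$, for $\delta$ small one has $K_g - \Delta_g u < 0$, so $K_{g'} = e^{-2u}(K_g - \Delta_g u) < 0$ there with $(K_{g'})_- \geq c_1\,\epsilon/\delta$; since this region carries a definite fraction of the area of the smaller square for all small $\delta$, I get $\int_{\SS^2}(K_{g'})_-\, dA_{g'} \geq c_2\,\epsilon/\delta$. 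To finish, I would first fix $\epsilon$ small enough that $\|g' - g\|_{C^1} < \eta$ and — by the openness part, the $C^1$ distance being $O(\epsilon)$ uniformly in $\delta$ — $g' \in \sM_+$ for all small $\delta$; then pick $\delta$ small enough that $c_2\,\epsilon/\delta \geq c$.

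I expect the density step to be the main obstacle. Openness is essentially formal once the integration-by-parts identity is in hand. The density construction, by contrast, requires a conformal factor that is at once $C^1$-small and has $L^1$-large positive Laplacian; this forces the amplitude–frequency scaling $u \sim \epsilon\delta\sin(\cdot/\delta)$, and one then has to check carefully both that $\|\nabla u\|_{C^0}$ remains $O(\epsilon)$ as $\delta \to 0$ and that the region of large negative curvature retains a definite measure. One should note that there is no tension with Gauss–Bonnet: $\int_{\SS^2}(K_{g'})_+\, dA_{g'} = 4\pi + \int_{\SS^2}(K_{g'})_-\, dA_{g'}$, so the positive part grows alongside the negative part, as it must for a metric on $\SS^2$.
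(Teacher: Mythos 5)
Your argument is correct, and it reaches both halves of the theorem by a route that differs from the paper's mainly in the openness step. The paper first proves Proposition \ref{c1.characterization.mplus}: $e^{2w}g_* \in \sM_+$ if and only if the quasilinear quantity $\cQ_w \phi = -\Delta_*\phi - |\nabla^*(\phi-w)|_*^2 + 1$ is positive for some smooth $\phi$; this is obtained from a positive supersolution criterion ($L_g f>0$ for some $f>0$) proved with the maximum principle, and openness is then a one-line pointwise algebraic estimate, $\cQ_w\phi \geq \cQ_v\phi - |\nabla^*(2\phi-w-v)|_*\,|\nabla^*(w-v)|_*$, which needs only smallness of the gradient of the relative conformal factor. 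You instead work with the quadratic form of $-\Delta_g+K_g$, exploit conformal invariance of the Dirichlet energy and $K_{\tilde g}\,dA_{\tilde g}=(K_g-\Delta_g u)\,dA_g$, integrate by parts, and absorb the error via a G\aa rding-type coercivity bound interpolated with $\lambda_1(g)>0$; that is a genuinely different (variational rather than maximum-principle) mechanism, though both hinge on the same structural point that the conformal change enters only through first derivatives of the conformal factor. The paper's supersolution characterization has the side benefit of the explicit corollary that $|\nabla^* w|_*<1$ forces $e^{2w}g_*\in\sM_+$, while your form argument is more self-contained and generalizes routinely to operators without a maximum principle. For density the two arguments are essentially the same: the paper perturbs by $\alpha h$ with $h=-n^{-1}\cos(n\theta)$ globally (amplitude inversely proportional to frequency, so the $C^1$ norm is $O(\alpha)$ uniformly in $n$ while $\Delta_* h\sim n\cos(n\theta)$ makes $\int (K)_-$ of order $\alpha n$ on the equatorial band), and your $u=\epsilon\delta\,a\,\sin(x^1/\delta)$ is the localized chart version of exactly this scaling, with the identical quantifier order (fix the amplitude using the frequency-independent openness threshold, then send the frequency to infinity). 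Your closing Gauss--Bonnet remark matches the intended picture; no gaps.
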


Finally, in Section \ref{sec:gibbons.hoop.conjecture} we discuss Gibbons' formulation 
\cite{gibbons-hoop-conjecture} of Thorne's hoop conjecture in general relativity which states that the Birkhoff invariant $\beta(g)$ of an apparent horizon is no larger than 
$4 \, \pi \, m_{\operatorname{ADM}}$. In a weaker version of this conjecture one replaces $\beta(g)$ with the length of the shortest nontrivial geodesic, $\ell(g)$. More details on the conjectures and their precise formulation can be found in Section \ref{sec:gibbons.hoop.conjecture}. We construct initial data sets that disprove these conjectures.

\begin{theo*}
There exists a complete, asymptotically flat $(M^3, g)$ with non-negative scalar curvature, that is precisely Schwarzschild outside a compact set, whose unique horizon is totally geodesic, strictly stable, and satisfies $\ell(\partial M^3, g) > 4 \, \pi \, m_{\operatorname{ADM}}(M^3, g)$.
\end{theo*}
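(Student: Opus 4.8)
The plan is to produce the manifold $(M^3,g)$ explicitly by combining the extension theorem stated above with a careful choice of the boundary metric on $\SS^2$. The key observation is that the extension theorem turns a metric $g_0 \in \sM_+$ on $\SS^2$ into a complete asymptotically flat, scalar-nonnegative 3-manifold that is Schwarzschild near infinity and whose boundary is the \emph{minimal} horizon $(\SS^2,g_0)$; moreover, one can take the Schwarzschild mass $m$ as close as desired to $(\area(g_0)/16\pi)^{1/2}$, which is then the ADM mass $m_{\operatorname{ADM}}$. To contradict the weak hoop conjecture I therefore need a metric $g_0 \in \sM_+$ for which the \emph{ratio}
\[
\frac{\ell(\SS^2,g_0)}{4\pi \, m_{\operatorname{ADM}}} = \frac{\ell(\SS^2,g_0)}{4\pi \, (\area(g_0)/16\pi)^{1/2}} = \frac{4\sqrt{\pi}\,\ell(\SS^2,g_0)}{\sqrt{\area(g_0)}}
\]
exceeds $1$; that is, I need $\ell(\SS^2,g_0)^2 > \area(g_0)/(16\pi)$, i.e. a metric on $\SS^2$ whose shortest closed geodesic is long compared to the square root of its area, while still lying in $\sM_+$. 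Since the extension in the theorem makes the horizon totally geodesic and strictly stable precisely when $g_0 \in \sM_+$ with $\lambda_1(-\Delta_{g_0}+K_{g_0})>0$ (the construction in Section \ref{sec:collar.extensions} begins from such a $g_0$ and the collar is built so the boundary is totally geodesic), the strict-stability, total-geodesy, and uniqueness-of-horizon conclusions come for free from the extension theorem and item (b).

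The heart of the matter is thus the model metric. The natural candidate is a long, thin, capped cylinder: take $\SS^2$ with a metric that looks like a round cylinder $S^1(\varepsilon) \times [0,L]$ of small circumference $2\pi\varepsilon$ and large length $L$, capped off smoothly at both ends by small spherical caps. For such a ``cigar'' the area is $O(\varepsilon L)$ while, for $L$ large compared to $\varepsilon$, the shortest closed geodesic is the waist circle of length $2\pi\varepsilon$ — wait, that is \emph{short}, so that is the wrong shape. The correct shape is the opposite degeneration: I want the girth direction long, so instead I should take a metric that is \emph{nearly flat on a large region}, e.g. glue two large flat (or nearly flat) hemispherical-type caps so that a shortest closed geodesic must wrap around a large ``equator.'' Concretely, consider a rotationally symmetric metric $g_0 = dt^2 + \phi(t)^2 \, d\theta^2$ on $\SS^2$, $t \in [-T,T]$, with $\phi$ chosen so that $\phi$ is large (of size $\sim R$) on most of the interval and tapers to the poles at the ends; choosing the profile so that the region of large $\phi$ has length $O(R)$ as well makes $\area(g_0) = O(R^2)$ but the shortest closed geodesic — which among rotationally symmetric metrics is the shortest ``parallel'' $\{t = t_0\}$ at a critical point of $\phi$, or possibly a ``meridian'' of length $2T = O(R)$ — can be arranged to have length with a \emph{larger constant} than $\sqrt{\area}/(4\sqrt\pi)$. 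The point is that the isoperimetric-type inequality relating $\ell$ and $\sqrt{\area}$ on $\SS^2$ is \emph{not} saturated at the round sphere as far as this particular ratio goes, so there is room; one must simply compute the ratio for an explicit profile and check it beats $1$.

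The remaining constraint — and this is the main obstacle — is ensuring $g_0 \in \sM_+$, i.e. $\lambda_1(-\Delta_{g_0} + K_{g_0}) > 0$. For a very elongated or very flat surface the Gauss curvature is negative over a large region (indeed this is exactly the phenomenon exploited in the second theorem above), so positivity of $\lambda_1(-\Delta+K)$ is delicate and is precisely where the construction could fail. I would handle this by exploiting the freedom already present in the paper: rather than building the ``thin/flat'' metric directly, I would start from the second theorem, which asserts that metrics $g_0 \in \sM_+$ with $\int_{\SS^2}(K_{g_0})_- \, dA_{g_0}$ arbitrarily large are $C^1$-dense in $\sM_+$, and more importantly whose proof (Section \ref{sec:horizon.examples}) produces explicit rotationally symmetric examples with controlled geometry. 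I would take such an explicit $g_0 \in \sM_+$ with a long, nearly-flat waist, estimate $\ell(\SS^2,g_0)$ from below (for rotationally symmetric metrics the shortest closed geodesic is computed by a one-dimensional minimization over parallels and meridians, so this is a bounded-below-by-inspection argument) and $\area(g_0)$ from above, and verify $\ell^2 > \area/(16\pi)$ directly. Then apply the extension theorem to this $g_0$, with $m$ chosen so close to $(\area(g_0)/16\pi)^{1/2}$ that the strict inequality $\ell(\SS^2,g_0) > 4\pi m = 4\pi \, m_{\operatorname{ADM}}$ survives; total geodesy, strict stability, uniqueness of the horizon, and ``Schwarzschild outside a compact set'' are then all delivered by the extension theorem and conclusion (b). The one genuinely new estimate required is the lower bound on $\ell$ for the explicit profile, which I expect to be a short computation but is the crux of why the conjecture fails.
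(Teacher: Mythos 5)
Your reduction is the same as the paper's: apply Theorem \ref{main.theorem} to a suitable $g_0 \in \sM_+$ with $m$ slightly above $\sqrt{\area(g_0)/16\pi}$, and get total geodesy, strict stability and uniqueness of the horizon from the remarks following Lemma \ref{collar.lemma} and Theorem \ref{main.theorem}. But there is a genuine gap, and it starts with an arithmetic slip that hides the real difficulty: $4\pi\,(\area(g_0)/16\pi)^{1/2} = \sqrt{\pi\,\area(g_0)}$, so the condition you need is $\ell(\SS^2,g_0)^2 > \pi\,\area(g_0)$, not $\ell^2 > \area/(16\pi)$ (your ratio $4\sqrt{\pi}\,\ell/\sqrt{\area}$ is also off by the factor $4\pi$). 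The round sphere has $\ell^2/\area = (2\pi)^2/(4\pi) = \pi$ exactly, so far from there being ``room,'' you must strictly beat the round sphere's systolic ratio. This is a borderline systolic problem: for instance the ``pancake'' (two nearly flat discs glued along their rims, the natural limit of your long-flat-waist rotationally symmetric profiles) has closed geodesics crossing the rim of length about $4R$ against area about $2\pi R^2$, giving ratio $8/\pi < \pi$; and the metrics of Section \ref{sec:horizon.examples} are engineered for large $\int (K)_-$, with no control whatsoever on $\ell$, so invoking them does not help (the paper's counterexample is explicitly independent of that section). Your assertion that for rotationally symmetric metrics the systole is computed by minimizing over parallels and meridians is also unjustified. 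In short, the ``one genuinely new estimate'' you defer is precisely the crux, and the route you sketch toward it is based on the miscomputed threshold.

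The missing idea in the paper is a specific, non-rotationally-symmetric example: Croke's smoothed bi-equilateral triangle, i.e.\ the $\ZZ_3$ quotient of a flat torus, which is a sphere, flat except at three cone points of angle $2\pi/3$, with area $2\sqrt{3}$ and shortest closed geodesic of length $2\sqrt{3}$, so the ratio is $2\sqrt{3} > \pi$ (conjecturally optimal, by Calabi). Lemma \ref{triangle.geodesic.lemma} smooths the cone points keeping the curvature non-negative (hence the metric lies in $\sM_+$ with no eigenvalue estimate needed) and, crucially, proves the systole lower bound $\ell \geq 2\sqrt{3}-\delta$ by lifting any closed geodesic to the torus and using the minimal translation length of the lattice together with a convexity argument near the smoothed cone points; this is the nontrivial step your proposal leaves open. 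Note also that Pu's inequality rules out any candidate with antipodal $\ZZ_2$ symmetry, which is another reason generic symmetric profiles are a poor starting point.
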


Our counterexample is independent of Section \ref{sec:horizon.examples}. We use a construction originally due to Croke: a smoothed out bi-equilateral triangle with positive curvature that has been conjectured by Calabi to have maximal systolic ratio \cite{croke-length-shortest-geodesic}, \cite{calabi-cao-simple-closed-geodesics}. While the question of optimality is open, the systolic ratio is large enough that coupled with our optimal mass computation in Section \ref{sec:gluing.schwarzschild} settles Gibbons' conjecture in the negative. We would like to thank Davi Maximo for pointing out Gibbons' conjecture to us and for helpful conversations during the early stages of this work.

\section{Canonical collar extensions of $\sM_+$}

\label{sec:collar.extensions}

In this section we construct scalar positive collar extensions of metrics in $\sM_+$ that we will then turn into complete initial data sets in Section \ref{sec:gluing.schwarzschild}. These collar extensions are topological cylinders $\SS^2 \times [0,1]$ foliated by strictly mean convex spheres from $\sM_+$, all of which are round near $\SS^2 \times \{1\}$. We endow the cylinders with a scalar positive metric by solving a continuous family of variational problems.

\begin{prop}
The space $\sM_+$ is path-connected.
\end{prop}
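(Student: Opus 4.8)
The plan is to show that every $g \in \sM_+$ can be joined inside $\sM_+$ to a round metric by a path of conformal rescalings, and that all round metrics on $\SS^2$ are themselves joined to one another inside $\sM_+$. The input making this work is the conformal behaviour of the quadratic form of $-\Delta + K$ on a surface: writing $g = e^{2u}\bar g$ on $\SS^2$, one has $dA_g = e^{2u}\,dA_{\bar g}$, $|\nabla_g f|_g^2 = e^{-2u}|\nabla_{\bar g}f|_{\bar g}^2$ and $K_g = e^{-2u}(K_{\bar g} - \Delta_{\bar g}u)$, so that
\[
\int_{\SS^2}\big(|\nabla_g f|_g^2 + K_g\,f^2\big)\,dA_g = \int_{\SS^2}\big(|\nabla_{\bar g}f|_{\bar g}^2 + (K_{\bar g} - \Delta_{\bar g}u)\,f^2\big)\,dA_{\bar g}
\]
for all $f$. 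Since $g \in \sM_+$ is equivalent to positive-definiteness of the left-hand form on nonzero functions, this identity shows $e^{2u}\bar g \in \sM_+$ if and only if $\lambda_1\big(-\Delta_{\bar g} + (K_{\bar g} - \Delta_{\bar g}u)\big) > 0$; in particular, positivity of $\lambda_1$ is a conformally invariant condition, and every round metric lies in $\sM_+$ (there $K$ is a positive constant and $\lambda_1(-\Delta) = 0$).

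Given $g \in \sM_+$, I would then use the uniformization theorem to write $g = e^{2u}\bar g$ with $\bar g$ a round metric, and consider the smooth family $g_t := e^{2tu}\bar g$, $t \in [0,1]$, joining $\bar g$ to $g$. By the displayed identity, $g_t \in \sM_+$ exactly when $\lambda_1(-\Delta_{\bar g} + V_t) > 0$, where
\[
V_t := K_{\bar g} - t\,\Delta_{\bar g}u = (1-t)\,K_{\bar g} + t\,(K_{\bar g} - \Delta_{\bar g}u)
\]
is an affine path of potentials whose endpoints $V_0 = K_{\bar g}$ and $V_1 = K_{\bar g} - \Delta_{\bar g}u$ satisfy $\lambda_1(-\Delta_{\bar g} + V_0) > 0$ and $\lambda_1(-\Delta_{\bar g} + V_1) > 0$ (the first because $\bar g$ is round, the second because $g \in \sM_+$). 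The mechanism that closes the argument is that $V \mapsto \lambda_1(-\Delta_{\bar g} + V)$ is an infimum of functionals affine in $V$, hence concave, so $\lambda_1(-\Delta_{\bar g} + V_t) \geq (1-t)\lambda_1(-\Delta_{\bar g} + V_0) + t\,\lambda_1(-\Delta_{\bar g} + V_1) > 0$ for every $t \in [0,1]$. Thus the whole family $g_t$ stays in $\sM_+$, and $g$ is joined inside $\sM_+$ to the round metric $\bar g$.

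It remains to join the uniformizing round metrics arising from different $g$'s to one another inside $\sM_+$; this is the only part requiring a little care, since uniformization does not produce a single fixed round metric. But the space of round metrics on $\SS^2$ is path-connected --- scalings reduce to fixed curvature, and the constant-curvature-$1$ metrics form a single orbit of the connected group $\mathrm{Diff}^+(\SS^2)$ --- and all of them lie in $\sM_+$. Concatenating ``uniformize $g_1$ down to a round metric / move among round metrics / uniformize up to $g_2$'' joins any $g_1, g_2 \in \sM_+$ inside $\sM_+$. I expect no serious obstacle: the substance is the conformal-invariance-plus-concavity observation, and the remaining steps are routine.
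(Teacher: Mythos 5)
Your proof is essentially the paper's: uniformize $g = e^{2u}\bar g$, run the path $e^{2tu}\bar g$, and observe that the quadratic form of $-\Delta+K$ becomes $\int_{\SS^2} |\nabla_{\bar g} f|_{\bar g}^2 + (K_{\bar g} - t\,\Delta_{\bar g} u)\,f^2 \, dA_{\bar g}$, which is affine in $t$ --- the paper checks positivity of this form for each fixed test function, you package the same fact as concavity of $\lambda_1$ in the potential, and you additionally spell out the routine step (left implicit in the paper) that the round metrics are path-connected among themselves. One small correction: the aside that ``positivity of $\lambda_1$ is a conformally invariant condition'' is false --- since every metric on $\SS^2$ is conformal to a round one, such invariance would force $\sM_+$ to contain all metrics, which it does not --- but this claim is never used, so your argument stands as written.
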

\begin{proof}
Let $g$ be such that $\lambda_1(-\Delta_g + K_g) > 0$. By uniformization we may write $g = e^{2w} \, g_*$ for a round metric $g_*$ with area $4 \, \pi$. It suffices to show that the path $t \mapsto g_t \triangleq e^{2tw} \, g_*$ stays within $\sM_+$. Let $f \in C^\infty$, $f \not \equiv 0$. The relevant bilinear form for $-\Delta_{g_t} + K_{g_t}$, $t \in [0,1]$, is
\[ \int |\nabla^{g_t} f|_{g_t}^2 + K_{g_t} \, f^2 \, dA_{g_t} = \int |\nabla^{*} f|_{*}^2 + (1 - t \, \Delta_* w) \, f^2 \, dA_{*} \text{.} \]
This quantity is positive at $t = 0$ since $g_0 \in \sM_+$ (it is round) and positive at $t = 1$ since $g_1 = g \in \sM_+$ (by assumption). It is also linear in $t$, so it is positive for all $t \in [0,1]$. Since $f$ was arbitrary, we have $\lambda_1(-\Delta_{g_t} + K_{g_t}) > 0$ for all $t \in [0, 1]$.
\end{proof}

\begin{rema}
The same argument shows that the larger set $\closure{\sM_+}$ of metrics $g$ on $\SS^2$ with $\lambda_1(-\Delta_g + K_g) \geq 0$ is also path connected.
\end{rema}

\begin{lemm}
\label{extension.lemma}
For any $g \in \sM_+$ there exists a smooth path of metrics $t \mapsto g(t) \in \sM_+$, $t \in [0,1]$, with
\begin{enumerate}[(i)]
\item $g(0) = g$,
\item $g(1)$ round,
\item $\dot{g} \equiv 0$ for $t \in [1/2,1]$, and
\item $\frac{d}{dt} dA_{g(t)} \equiv 0$ for all $t \in [0,1]$ where $dA_g$ denotes the area
form for a metric $g$.
\end{enumerate}
\end{lemm}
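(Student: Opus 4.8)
The plan is to combine the linear conformal interpolation used to prove the preceding Proposition with a family of volume‑normalizing diffeomorphisms (Moser's trick), and then reparametrize in $t$ so that the path becomes constant on $[1/2,1]$.

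First, by uniformization write $g = e^{2w}\, g_*$ with $g_*$ round of area $4\pi$, and set $h_t \triangleq e^{2(1-t)w}\, g_*$ for $t \in [0,1]$, so that $h_0 = g$ and $h_1 = g_*$. The argument in the proof of the preceding Proposition (applied with its parameter equal to $1-t$) shows that $h_t \in \sM_+$ for every $t$, since the relevant bilinear form for $-\Delta_{h_t} + K_{h_t}$ is affine in $t$ and positive at both endpoints. Put $A \triangleq \area(g)$ and $c_t \triangleq A/\area(h_t) > 0$; then $c_t$ is smooth in $t$, $c_0 = 1$, and the volume forms $\mu_t \triangleq c_t\, dA_{h_t}$ all have total mass $A$.

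Since $\SS^2$ is closed and all the $\mu_t$ have the same total mass, the parametrized version of Moser's trick yields a smooth family of diffeomorphisms $\psi_t \colon \SS^2 \to \SS^2$ with $\psi_0 = \mathrm{id}$ and $\psi_t^* \mu_t = \mu_0 = dA_g$ for all $t$: the closed $2$-form $\tfrac{d}{dt}\mu_t$ has integral $\tfrac{d}{dt}A = 0$, hence equals $d\alpha_t$ with $\alpha_t$ depending smoothly on $t$; define $X_t$ by $\iota_{X_t}\mu_t = -\alpha_t$ and let $\psi_t$ be its flow starting from the identity at $t = 0$, so that $\tfrac{d}{dt}(\psi_t^*\mu_t) = \psi_t^*\big(\mathcal{L}_{X_t}\mu_t + \tfrac{d}{dt}\mu_t\big) = \psi_t^*\big(-d\alpha_t + d\alpha_t\big) = 0$. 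Now define
\[ \widetilde g(t) \triangleq \psi_t^*\big( c_t\, h_t \big) = \psi_t^*\big( c_t\, e^{2(1-t)w}\, g_* \big), \qquad t \in [0,1] . \]
Then $\widetilde g(0) = \psi_0^*(c_0 h_0) = g$; $\widetilde g(1) = \psi_1^*(c_1\, g_*)$ is the pullback of a constant multiple of a round metric, hence round; $dA_{\widetilde g(t)} = \psi_t^*(c_t\, dA_{h_t}) = \psi_t^*\mu_t = dA_g$, so $\tfrac{d}{dt}\, dA_{\widetilde g(t)} \equiv 0$; and $\widetilde g(t) \in \sM_+$ because $\sM_+$ is invariant under pullback by diffeomorphisms and under multiplication by positive constants (the operator $-\Delta + K$ simply rescales). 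Joint smoothness in $t$ is clear, since $\psi_t$, $c_t$, and $e^{2(1-t)w}$ all depend smoothly on $t$.

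Finally, choose a smooth nondecreasing $\tau \colon [0,1] \to [0,1]$ with $\tau(0) = 0$ and $\tau \equiv 1$ on $[1/2,1]$, and set $g(t) \triangleq \widetilde g(\tau(t))$. This path still satisfies (i), (ii), (iv), and it is constant — hence has vanishing $t$-derivative — on $[1/2,1]$, giving (iii); it remains smooth because a smooth reparametrization that is locally constant loses no regularity at $t = 1/2$. The only genuinely nonroutine ingredient is the construction of the normalizing diffeomorphisms $\psi_t$ depending smoothly on $t$ with $\psi_0 = \mathrm{id}$, i.e.\ the parametrized Moser theorem on a closed surface; everything else is bookkeeping. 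Note that $g(1)$ is round only up to an isometry (it equals $\psi_1^*(c_1\, g_*)$, not $g_*$ itself): this is unavoidable, since if $g$ is not already round its area form cannot agree with that of a standard round metric, and it is harmless for the collar construction carried out in the next section.
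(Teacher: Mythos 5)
Your proof is correct and is essentially the paper's argument: the paper also follows the linear conformal path from the Proposition, fixes the total area by a $t$-dependent constant conformal factor, and pulls back by the flow of a vector field solving a prescribed-divergence (i.e.\ Moser) equation so that the area form is preserved pointwise. The only cosmetic differences are that the paper builds the constancy on $[1/2,1]$ into the conformal factor via a cutoff $\zeta(t)$ rather than reparametrizing afterwards, and writes your $\iota_{X_t}\mu_t=-\alpha_t$ step as solving $\Delta_{h(t)}\psi = -2(\zeta' w + a')$ with $X_t = \nabla^{h(t)}\psi$.
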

\begin{proof}
By uniformization we may write $g = e^{2 w(x)} g_*$ for some round metric $g_*$ with area $4 \, \pi$. Fix a smooth decreasing function $\zeta : [0,1] \to [0,1]$ with $\zeta(0) = 1$, $\zeta(1) = 0$, and $\zeta'(t) \equiv 0$ on $[1/2,1]$. We have already seen that $t \mapsto e^{2 \zeta(t) w(x)} g_*$, $t \in [0,1]$ is a smooth path in $\sM_+$ from $g$ to the round metric $g_*$. Since $\sM_+$ is invariant under dilations it follows that $t \mapsto h_t \triangleq e^{2\zeta(t)w(x)+2a(t)} g_* \in \sM_+$ too, with $a(t)$ chosen so that $a(0) = 1$ and $\frac{d}{dt} \area(\SS^2, e^{2\zeta(t)w(x)+2a(t)} g_*) = 0$; i.e.,
\[ a'(t) = - \zeta'(t) \fint_{\SS^2} w(x) \, dA_{e^{2\zeta w} g_*} \text{.} \]
Having fixed the metrics $h(t)$, for each $t$ let $X_t$ be such that $\divg_{h(t)} X_t = -2 \, (\zeta'(t) \, w+a'(t))$. Then take $\phi_t$ to be the integral flow along $X_t$ and consider $g(t) = \phi_t{}^* h(t)$. Then
\begin{align*}
\frac{d}{dt} dA_{g(t)} & = \frac{d}{dt} \phi_t{}^* dA_{h(t)} = \phi_t{}^* \Big[ \frac{d}{dt} dA_{h(t)} + \sL_{\dot{\phi}_t} dA_{h(t)} \Big] \\
    & = \phi_t{}^* \Big[ \frac{1}{2} \, \tr_{h(t)} \dot{h}(t) \, dA_{h(t)} + \divg_{h(t)} \dot{\phi}_t \, dA_{h(t)} \Big] \\
    & = \phi_t{}^* \Big[ 2 \, (a'(t) + \zeta'(t) \, w) + \divg_{h(t)} \dot{\phi}_t \Big] \, dA_{h(t)} = 0 \text{,}
\end{align*}
and the result follows.
\end{proof}

\begin{rema}
We can solve the prescribed divergence equation $\divg_{h(t)} X_t = -2(\zeta'(t) \, w+a'(t))$ smoothly in time. Indeed we can instead solve the elliptic equation $\Delta_{h(t)} \psi(t, \cdot) = -2 \, (\zeta'(t)\, w+a'(t))$ since the right hand side integrates to zero against the $h(t)$ area form, and $t \mapsto \psi(t, \cdot)$ is a smooth family of $C^\infty$ maps by elliptic regularity. Then we simply take $X_t = \nabla^{h(t)} \psi(t, \cdot)$.
\end{rema}

\begin{rema}
A consequence of $\zeta'(t) \equiv 0$ for $t \in [1/2,1]$ is that all the $g(t)$ with $t \in [1/2,1]$ are round spheres.
\end{rema}

From this point on we fix this path $t \mapsto g(t) \in \sM_+$ constructed by the lemma. For each $t$ we pick $u(t, \cdot)$ to be an eigenfunction corresponding to the first eigenvalue $\lambda(t)$ of $-\Delta_{g(t)} + K_{g(t)}$. This can be done in such a way that $(t, x) \mapsto u(t, x) > 0$ is smooth since the first eigenspace of these operators is one-dimensional. (See Appendix \ref{app:eigenfunction.smoothness}.) Normalize our choice of eigenfunctions so that, say, $u(t, \cdot)$ has unit $L^2$ norm with respect to the area form $dA_{g(t)}$. Notice then that $u(t, \cdot)$, $t \in [1/2, 1]$, depends on $\area(g(0))$ but not on $t$.

\begin{lemm}[Collar lemma]
\label{collar.lemma}
There exist $0 < \varepsilon_0 \ll 1$ and $A_0 \gg 1$ depending on $g(0)$ such that for all $\varepsilon \in (0, \varepsilon_0]$, $A \geq A_0$, the topological cylinder $\SS^2 \times [0,1]$ endowed with the metric
\[ \gamma = (1+\varepsilon \, t^2) \, g(t) + A^2 \, u(t, \cdot)^2 \, dt^2 \]
has the following properties:
\begin{enumerate}[(i)]
\item the scalar curvature is positive,
\item the initial boundary sphere $\{t = 0 \}$ is minimal, and
\item the remaining foliating spheres $\{ t = t_0 \}$, $t_0 \in (0, 1]$, are all strictly mean-convex relative to the outward normal direction (the $\partial_t$ direction).
\end{enumerate}
\end{lemm}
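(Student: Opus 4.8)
The plan is to treat $\gamma$ as a metric of warped-product type, $\gamma = \phi(t)^2\,g(t) + N(t,x)^2\,dt^2$, over the foliation $\Sigma_t := \SS^2 \times \{t\}$, with $\phi(t)^2 = 1 + \varepsilon\,t^2$ and $N(t,x) = A\,u(t,x)$, and to read off (i)--(iii) from the standard foliated formulas for mean and scalar curvature. The whole point of the construction is that the lapse $N$ is proportional to the first eigenfunction $u(t,\cdot)$ of $-\Delta_{g(t)} + K_{g(t)}$ precisely so that, in the scalar curvature formula, the (possibly very negative) Gauss curvature terms cancel against the lapse Laplacian and leave behind the \emph{positive} eigenvalue $\lambda(t)$.

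First I would compute the mean curvature of $\Sigma_t$ with respect to the outward unit normal $\nu = N^{-1}\partial_t$: one has $H = \tfrac{1}{2N}\,\partial_t \log\det(\phi^2\,g(t)) = \tfrac{1}{2N}\big( 4\,\phi'/\phi + \partial_t \log\det g(t) \big)$. By property (iv) of Lemma~\ref{extension.lemma} the area forms $dA_{g(t)}$ are independent of $t$, i.e.\ $\partial_t\log\det g(t) \equiv 0$, so $H = 2\,\phi'/(\phi\,N) = 2\,\varepsilon\,t\,\big/\,\big( (1+\varepsilon\,t^2)\,A\,u(t,\cdot)\big)$. This vanishes identically on $\{t = 0\}$, which is (ii), and is strictly positive on $\Sigma_{t_0}$ for every $t_0 \in (0,1]$ because $u > 0$, which is (iii); it is essential here that $\phi^2 = 1 + \varepsilon\,t^2$ has vanishing $t$-derivative at $t = 0$, which is why we take $t^2$ and not $t$.

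For (i) I would use the standard decomposition for a metric $N^2\,dt^2 + \gamma_t$,
\[ R_\gamma = R_{\gamma_t} - \tfrac{2}{N}\,\Delta_{\gamma_t} N - \tfrac{2}{N}\,\partial_t H - |k|^2 - H^2 \text{,} \]
where $\gamma_t = \phi(t)^2\,g(t)$ is the induced metric on $\Sigma_t$ and $k$ its second fundamental form. Since $\phi$ is constant on each slice, the two-dimensional conformal change formulas give $R_{\gamma_t} = \phi^{-2} R_{g(t)} = 2\,\phi^{-2}\,K_{g(t)}$ and $\Delta_{\gamma_t} N = \phi^{-2}\,\Delta_{g(t)} N$; and since $N = A\,u(t,\cdot)$ with $-\Delta_{g(t)} u + K_{g(t)}\,u = \lambda(t)\,u$, we get $\tfrac{2}{N}\,\Delta_{\gamma_t} N = 2\,\phi^{-2}\big( K_{g(t)} - \lambda(t)\big)$. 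Hence the first two terms collapse with an exact cancellation of $K_{g(t)}$,
\[ R_\gamma = 2\,\phi^{-2}\,\lambda(t) - \tfrac{2}{N}\,\partial_t H - |k|^2 - H^2 \text{.} \]
Now I would estimate the error: from the explicit formula for $H$, both $H$ and $\partial_t H$ are $O(A^{-1})$, while $k_{ij} = \tfrac{1}{2N}\,\partial_t (\gamma_t)_{ij}$ has $\partial_t \gamma_t$ bounded, so $|k|^2$, $H^2$ and $\tfrac{1}{N}\partial_t H$ are all $O(A^{-2})$, with the implied constants controlled by $\min_{[0,1]\times\SS^2} u > 0$ and by $C^1$ bounds on $u$ and on $t \mapsto g(t)$ — all depending on $g(0)$ only (using smoothness of $u$, Appendix~\ref{app:eigenfunction.smoothness}, and compactness of $[0,1]\times\SS^2$, and noting that $\dot g \equiv 0$ and $u$ is $t$-independent on $[1/2,1]$). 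On the other hand $g(t) \in \sM_+$ for every $t$, so $\lambda(t) \geq \lambda_0 := \min_{[0,1]}\lambda > 0$, and $\phi^{-2} \geq (1+\varepsilon_0)^{-1}$. Thus $R_\gamma \geq 2\,(1+\varepsilon_0)^{-1}\lambda_0 - C\,A^{-2}$ with $C = C(g(0))$; choosing $\varepsilon_0 \leq 1$ and then $A_0$ large enough that $C\,A_0^{-2} < \lambda_0$ forces $R_\gamma > 0$ for all $\varepsilon \in (0,\varepsilon_0]$ and $A \geq A_0$, giving (i).

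The genuine work is the bookkeeping in the last step: pinning down the foliated scalar curvature identity with the correct signs and conformal weights so that the $K_{g(t)}$ terms truly cancel, and checking that the $O(A^{-2})$ remainder is uniform in $t \in [0,1]$ and in $\varepsilon \in (0,\varepsilon_0]$ — which is exactly where one uses that the path $t \mapsto g(t)$ is smooth and eventually constant (so nothing degenerates as $t \to 1$). The conceptual heart of the argument — choosing the lapse to be the first eigenfunction — is already in the statement, so beyond this there is no real obstacle.
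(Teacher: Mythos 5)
Your proof is correct and is essentially the paper's argument: the foliated identity $R_\gamma = R_{\gamma_t} - \tfrac{2}{N}\Delta_{\gamma_t}N - \tfrac{2}{N}\partial_t H - |k|^2 - H^2$ you use is, after expanding $H$, $\partial_t H$, $k$ in terms of $\dot h$ and $\ddot h$, exactly the warped-product scalar curvature formula the paper starts from, and the key step (lapse $=$ first eigenfunction so that $K_{g(t)}$ cancels against the lapse Laplacian, leaving $2\lambda(t)$ plus an $O(A^{-2})$ remainder controlled by $\inf u$, $\inf\lambda$, and $C^2$ bounds on the path) is the same, as is the mean curvature computation $H = 2\varepsilon t/\big((1+\varepsilon t^2)Au\big)$ giving (ii) and (iii).
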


\begin{rema}
We can also arrange for $\{ t = 0 \}$ to be totally geodesic without any change in the proof beyond requiring that the auxiliary function $\zeta$ in Lemma \ref{extension.lemma} further satisfy $\zeta'(0) = \zeta''(0) = \ldots = 0$.
\end{rema}

\begin{proof}[Proof of Lemma \ref{collar.lemma}]
Write $h(t) = (1+ \varepsilon \ t^2) \, g(t)$, $v(t, x) = A \, u(t, x)$, and $\mu(t) = (1 + \varepsilon \, t^2)^{-1} \, \lambda(t)$ to simplify our notation to $\gamma = h(t) + v(t, x)^2 \, dt^2$ and $v(t, \cdot)$ now being an eigenfunction of $-\Delta_{h(t)} + K_{h(t)}$ with eigenvalue $\mu(t)$. Differentiating in $t$, taking traces and recalling that $\tr_g \dot{g} \equiv 0$, we get
\[ \dot{h} = 2 \, \varepsilon \, t \, g + (1 + \varepsilon \, t^2) \, \dot{g} \text{ and } \tr_h \dot{h} = 4 \, \varepsilon \, t \, (1 + \varepsilon \, t^2)^{-1} \text{.} \]
Differentiating and tracing again,
\[ \ddot{h} = 2 \, \varepsilon \, g + 4 \, \varepsilon \, t \, \dot{g} + (1 + \varepsilon \, t^2)^{-1} \, \ddot{g} \text{ and } \tr_h \ddot{h} = 4 \, \varepsilon \, (1 + \varepsilon \, t^2)^{-1} + \tr_g \ddot{g} \text{.} \]
The scalar curvature of the warped product metric $\gamma$ is
\begin{align*} 
R_\gamma & = 2 \, K_h - 2 \, v^{-1} \, \Delta_h v + v^{-2} \Big[ - \tr_h \ddot{h} - \frac{1}{4} \, (\tr_h \dot{h})^2 + \frac{\partial_t v}{v} \, \tr_h \dot{h} + \frac{3}{4} \, |\dot{h}|_h^2 \Big] \\
    & = 2 \, \mu + v^{-2} \Big[ - \tr_h \ddot{h} - \frac{1}{4} \, (\tr_h \dot{h})^2 + \frac{\partial_t v}{v} \, \tr_h \dot{h} + \frac{3}{4} \, |\dot{h}|_h^2 \Big] \\
    & \geq 2 \, \mu + v^{-2} \Big[ - \tr_h \ddot{h} + \frac{\partial_t v}{v} \, \tr_h \dot{h} \Big] \\
    & = 2 \, (1+\varepsilon \, t^2)^{-1} \, \lambda \\
    & \qquad + A^{-2} \, u^{-2} \Big[ - 4 \, \varepsilon \, (1+\varepsilon \, t^2)^{-1} - \tr_g \ddot{g} + 4 \, \varepsilon \, t \, (1+\varepsilon \, t^2)^{-1} \,  \frac{\partial_t u}{u} \Big] \\
    & = A^{-2} \, (1+\varepsilon \, t^2)^{-1} \, u^{-2} \Big[ 2 \, A^2 \, \lambda \, u^2 - 4 \, \varepsilon - (1+\varepsilon \, t^2) \, \tr_g \ddot{g} + 4 \, \varepsilon \, t \, \frac{\partial_t u}{u} \Big] \text{.}
\end{align*}
To get $R_\gamma > 0$ we simply need to arrange for
\[ 2 \, A^2 \, \lambda \, u^2 - 4 \, \varepsilon - (1+\varepsilon \, t^2) \, \tr_g \ddot{g} + 4 \, \varepsilon \, t \, \frac{\partial_t u}{u} > 0 \text{.} \]
Since $\lambda > 0$ and $u > 0$ we can estimate (term by term) that this quantity is
\[ \geq 2 \, A^2 \, \inf_t \lambda \, \inf_{t, x} u^2 - 4 \, \varepsilon - 2 \, \sup_t |\tr_g \ddot{g}| - 4 \, \varepsilon \, \sup_{t,x} | \partial_t \log u | \text{,} \]
which is indeed positive when $A \geq A_0 \gg 1$ and $0 < \varepsilon \leq \varepsilon_0 \ll 1$, with $A_0$, $\varepsilon_0$ depending on $\inf_t \lambda$, $\inf_{t,x} u$, $\sup_t |\tr_g \ddot{g}|$, and $\sup_{t,x} |\partial_t \log u|$.
\end{proof}

\begin{rema}
At this point we replace $u(t, x)$ by $A_0 \, u(t, x)$ and work with the family
\[ \gamma_\varepsilon = (1+\varepsilon \, t^2) \, g(t) + u(t, x)^2 \, dt^2 \]
of $\varepsilon \in (0,\varepsilon_0]$ dependent metrics on $\SS^2 \times [0,1]$. We will vary $\varepsilon$ later on, but only towards smaller values.
\end{rema}

\begin{figure}[ht!]
\label{figure}
\begin{tikzpicture}

  \begin{scope}[shift={(-1.5,0)}] 
    \draw [smooth, black!100] plot coordinates {(0, 1) (0.08, 0.98) (0.2, 0.8) (0.2, 0.6) (0.3, 0.4) (0.2, 0.2) (0.2, 0.0) (0.1, -0.2) (0.2, -0.4) (0.25, -0.6) (0.2, -0.8) (0.08, -0.98) (0, -1)};
    \draw [smooth, black!60] plot coordinates {(0, -1) (-0.08, -0.98) (-0.2, -0.8) (-0.2, -0.6) (-0.3, -0.4) (-0.2, -0.2) (-0.2, 0.0) (-0.1, 0.2) (-0.2, 0.4) (-0.25, 0.6) (-0.2, +0.8) (-0.08, +0.98) (0, 1)};
  \end{scope}
  \begin{scope}[shift={(1.5, 0)}] 
    \begin{scope} 
      \clip (0, -1) rectangle (0.3, 1);
      \draw [black!100] (0, 0) ellipse (0.25 and 1);
    \end{scope}
    \begin{scope} 
      \clip (-0.3, -1) rectangle (0, 1);
      \draw [black!100, dotted] (0, 0) ellipse (0.25 and 1);
    \end{scope}
  \end{scope}
  \begin{scope}[shift={(1.2, 0)}] 
    \begin{scope} 
      \clip (0, -1) rectangle (0.3, 1);
      \draw [black!100] (0, 0) ellipse (0.25 and 1);
    \end{scope}
    \begin{scope} 
      \clip (-0.3, -1) rectangle (0, 1);
      \draw [black!60, dotted] (0, 0) ellipse (0.25 and 1);
    \end{scope}
  \end{scope}
  \begin{scope}[shift={(0.9, 0)}] 
    \begin{scope} 
      \clip (0, -1) rectangle (0.3, 1);
      \draw [black!100] (0, 0) ellipse (0.25 and 1);
    \end{scope}
    \begin{scope} 
      \clip (-0.3, -1) rectangle (0, 1);
      \draw [black!60, dotted] (0, 0) ellipse (0.25 and 1);
    \end{scope}
  \end{scope}
  \begin{scope}[shift={(0.6, 0)}] 
    \begin{scope} 
      \clip (0, -1) rectangle (0.3, 1);
      \draw [black!100] (0, 0) ellipse (0.25 and 1);
    \end{scope}
    \begin{scope} 
      \clip (-0.3, -1) rectangle (0, 1);
      \draw [black!60, dotted] (0, 0) ellipse (0.25 and 1);
    \end{scope}
  \end{scope}

  \begin{scope} 
    \draw [black!100] plot coordinates {(-1.5, 1) (1.5, 1)};
    \draw [black!100] plot coordinates {(-1.5, -1) (1.5, -1)};
  \end{scope}

  \node at (-1.5, -2) {$g(0)$}; \draw [black!100, ->] plot coordinates {(-1.5, -1.6) (-1.5, -1.1)}; 
  \node at (0.6, -2) {$g(t)$}; \draw [black!100, ->] plot coordinates {(0.6, -1.6) (0.6, -1.1)}; 
  \node at (1.5, -2) {$g(1)$}; \draw [black!100, ->] plot coordinates {(1.5, -1.6) (1.5, -1.1)}; 

  \begin{scope} 
    \draw [black!100, dashed] plot coordinates {(-1.5, 1) (-1.3, 1.01) (-0.8, 1.02) (-0.2, 1.07) (0.4, 1.12) (0.9, 1.2) (1.5, 1.4)};
    \draw [black!100, dashed] plot coordinates {(-1.5, -1) (-1.3, -1.01) (-0.8, -1.02) (-0.2, -1.07) (0.4, -1.12) (0.9, -1.2) (1.5, -1.4)};
    \draw [black!100, dashed] (1.5, 0) ellipse (0.5 and 1.4);
    \node at (0.7, 2.3) {$(1+\varepsilon \, t^2) \, g(t)$}; \draw [black!100, ->] plot coordinates {(0.9, 1.9) (0.9, 1.3)}; 
  \end{scope}

  \begin{scope}[shift={(4,0)}] 
    \begin{scope} 
      \clip (0, -1) rectangle (0.3, 1);
      \draw [black!100] (0, 0) ellipse (0.25 and 1);
    \end{scope}
    \begin{scope} 
      \clip (-0.3, -1) rectangle (0, 1);
      \draw [black!60] (0, 0) ellipse (0.25 and 1);
    \end{scope}
    \begin{scope} 
      \draw [smooth, black!100] plot coordinates {(0, 1) (0.2, 1.01) (0.3, 1.05) (0.5, 1.2) (0.7, 1.5) (0.8, 1.8)};
      \draw [smooth, black!100] plot coordinates {(0, -1) (0.2, -1.01) (0.3, -1.05) (0.5, -1.2) (0.7, -1.5) (0.8, -1.8)};
    \end{scope}
  \end{scope}
    
  \begin{scope}[shift={(4,0)}] 
    \draw [black!100, dashed] (0, 0) ellipse (0.55 and 1.5); 
    \begin{scope} 
      \draw [smooth, black!100, dashed] plot coordinates {(0, 1.5) (0.2, 1.51) (0.3, 1.57) (0.5, 1.8) (0.7, 2.25) (0.8, 2.7)};
      \draw [smooth, black!100, dashed] plot coordinates {(0, -1.5) (0.2, -1.51) (0.3, -1.57) (0.5, -1.8) (0.7, -2.25) (0.8, -2.7)};
    \end{scope}
    \node at (4.2, 0) {Schwarzschild $m > \sqrt{\frac{\area(g)}{16 \, \pi}}$};
    \draw [black!100, ->] plot coordinates {(1.8, 0) (0.7, 0)};
  \end{scope}
        
\end{tikzpicture}
\caption{Any $g(0) \in \sM_+$ can be joined to a round metric $g(1)$ via a path $g(t) \in \sM_+$ with pointwise constant area form (Lemma \ref{extension.lemma}). The metric $(1+\varepsilon \, t^2) \, g(t) + dt^2$ is foliated by mean convex spheres ($t =$ const) and can be warped to a metric $\gamma_\varepsilon$ of positive scalar curvature for $\varepsilon$ sufficiently small (Lemma \ref{collar.lemma}). The next section deals with gluing exterior Schwarzschild regions $m > \sqrt{\area(g)/16 \pi}$ to $\gamma_\varepsilon$ as $\varepsilon \downarrow 0$ and $m \downarrow \sqrt{\area(g)/16\pi}$.}
\end{figure}
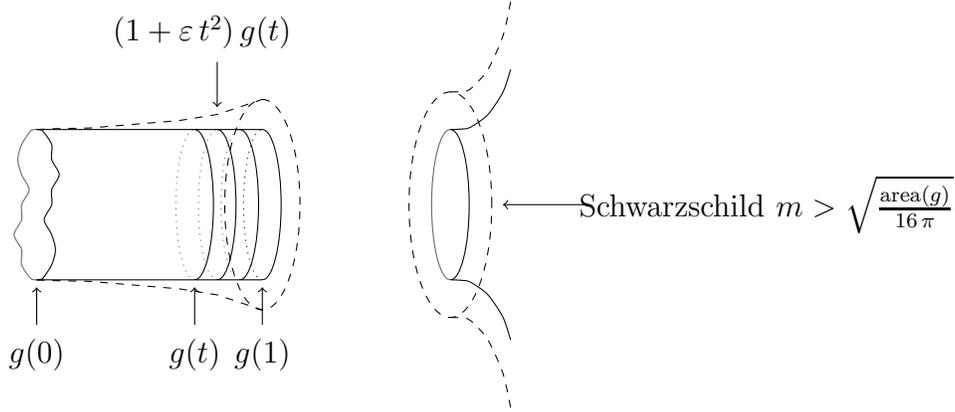

\section{Smoothly gluing exterior Schwarzschild regions}

\label{sec:gluing.schwarzschild}

The following extension result is the main theorem of this section:

\begin{theo}
\label{main.theorem}
Let $g \in \sM_+$. For any $m > 0$ such that $16 \, \pi \, m^2 > \area(g)$ there exists an asymptotically flat 3-manifold $M^3$ with non-negative scalar curvature such that
\begin{enumerate}[(i)]
\item $\partial M^3$ is isometric to $(\SS^2, g)$ and minimal,
\item $M^3$ is isometric to a mass-$m$ Schwarzschild metric outside a compact set, and
\item $M^3$ is foliated by mean convex spheres that eventually coincide with standard Schwarzschild coordinate spheres.
\end{enumerate}
\end{theo}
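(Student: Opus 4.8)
The plan is to attach to the scalar--positive collar $(\SS^2 \times [0,1], \gamma_\varepsilon)$ from Section~\ref{sec:collar.extensions} an explicit rotationally symmetric ``neck'' that bends its round outer sphere out to an exact exterior Schwarzschild region of mass $m$; the interpolation is available precisely because $16\,\pi\,m^2 > \area(g)$ is a \emph{strict} inequality. I would first record the geometry of the collar near its outer end: for $t \in [1/2,1]$ one has $g(t) = g(1)$, a round metric of area $\area(g)$, and the chosen eigenfunction $u(t,\cdot) \equiv u_1$ is a constant, so there $\gamma_\varepsilon = (1+\varepsilon\,t^2)\,\tfrac{\area(g)}{4\pi}\,g_{\SS^2} + u_1^2\,dt^2$ is rotationally symmetric with area radius $\phi(t) = \sqrt{(1+\varepsilon\,t^2)\,\area(g)/(4\pi)}$ and arc-length $s$ satisfying $ds = u_1\,dt$. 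Thus $\{t=1\}$ is a round sphere of radius $r_0(\varepsilon) = \sqrt{(1+\varepsilon)\,\area(g)/(4\pi)}$, it is $O(\varepsilon)$--nearly minimal, its Hawking mass is within $O(\varepsilon)$ of $r_0(\varepsilon)/2$, and --- since $16\,\pi\,m^2 > \area(g)$ --- one may fix $\varepsilon$ small enough that $r_0(\varepsilon) < 2\,m$, i.e.\ so that the outer Hawking mass is strictly below $m$. (For such $\varepsilon$ the formula $(1+\varepsilon\,t^2)\,\tfrac{\area(g)}{4\pi}\,g_{\SS^2} + u_1^2\,dt^2$ in fact has positive scalar curvature for all $t \geq \tfrac12$, not just $t \leq 1$.)

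Next I would glue onto $\{t=1\}$ a rotationally symmetric metric $\bar g = ds^2 + \phi(s)^2\,g_{\SS^2}$ on a cylinder $\SS^2 \times [1,2]$ that (a) agrees with the outer collar profile --- continued past $t=1$ by the formula above --- for $s$ near $1$; (b) has $\phi' > 0$ and $R_{\bar g} = \tfrac{2}{\phi^2}(1 - (\phi')^2 - 2\,\phi\,\phi'') \geq 0$ throughout; and (c) coincides \emph{exactly}, for $s$ near $2$, with the mass-$m$ Schwarzschild profile $(\phi')^2 = 1 - 2m/\phi$. To build such a $\phi$ I would use the Hawking mass $m_H = \tfrac{\phi}{2}(1-(\phi')^2)$ of the spheres $\{s = \mathrm{const}\}$ and the identity $m_H' = \tfrac14\,\phi^2\,\phi'\,R_{\bar g}$: reparametrizing by the area radius $\phi$ and writing $G(\phi) := \phi - 2\,m_H = \phi\,(\phi')^2$, the requirements $R_{\bar g} \geq 0$ and $\phi' > 0$ are exactly
\[ \frac{dG}{d\phi} = 1 - \tfrac12\,\phi^2\,R_{\bar g} \leq 1 \quad\text{and}\quad G > 0 \text{.} \]
Now $G$ is $O(\varepsilon^2)$--small at $\phi = r_0(\varepsilon)$ and must reach the Schwarzschild value $G = \phi - 2m$ at the outer end; the slope bound $G' \leq 1$ forces $G(\phi) \leq \phi - r_0(\varepsilon) + O(\varepsilon^2)$, so this target lies below that barrier exactly when $2m > r_0(\varepsilon)$ --- which is exactly what the choice of $\varepsilon$ above arranged. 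Hence one can take, say, $\phi_1 = 4m$ and interpolate $G$ smoothly and monotonically from its collar jet at $\phi = r_0(\varepsilon)$ to the affine function $\phi - 2m$ at $\phi = \phi_1$ (matching all derivatives there, which forces $R_{\bar g} \to 0$), keeping $0 < G' \leq 1$; reconstructing $\phi(s)$ from $\phi' = \sqrt{G(\phi)/\phi}$ and arc length gives the neck, with $R_{\bar g} > 0$ wherever $G' < 1$.

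Finally let $M^3$ be the union of $(\SS^2\times[0,1], \gamma_\varepsilon)$, the neck $(\SS^2\times[1,2], \bar g)$, and the mass-$m$ exterior Schwarzschild region $\{\phi \geq \phi_1\}$, glued smoothly along the two interfaces as above. Then $M^3$ is complete and asymptotically flat with $R \geq 0$ everywhere; $\partial M^3 = \{t=0\}$ is isometric to $(\SS^2, g(0)) = (\SS^2, g)$ and minimal by Lemma~\ref{collar.lemma}; $M^3$ is isometric to mass-$m$ Schwarzschild outside a compact set; and the level sets $\{t = \mathrm{const}\}$, then $\{s = \mathrm{const}\}$, then the Schwarzschild coordinate spheres assemble into a smooth foliation by spheres that are mean convex (by Lemma~\ref{collar.lemma}(iii) on the collar, by $\phi' > 0$ on the neck, and directly in Schwarzschild) and that coincide with standard Schwarzschild coordinate spheres in the end, giving (i)--(iii).

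The main obstacle is the neck construction: engineering the profile so that it terminates in an \emph{exact} Schwarzschild region of the \emph{prescribed} mass $m$ --- not merely an asymptotically Schwarzschild one --- while keeping $R_{\bar g} \geq 0$ and the foliating spheres mean convex and matching to infinite order at both ends. The available room is precisely the ``mass budget'' $2m - r_0(\varepsilon) > 0$, which is positive only because $16\,\pi\,m^2 > \area(g)$ is strict; this is also why, when one wants $m \downarrow \sqrt{\area(g)/(16\pi)}$, one must simultaneously send $\varepsilon \downarrow 0$ so that $r_0(\varepsilon) \to \sqrt{\area(g)/(4\pi)} < 2m$ persists.
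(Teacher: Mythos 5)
Your proposal is correct in substance and reaches the same conclusions, and it shares the paper's skeleton (the scalar-positive collar of Section \ref{sec:collar.extensions}, a rotationally symmetric transition region attached at the round outer sphere, then an exact mass-$m$ Schwarzschild exterior, with the strict inequality $16\pi m^2 > \area(g)$ providing the room), but the transition region is built by a genuinely different mechanism. The paper works throughout with the arclength profile $f(s)$ of $f(s)^2 g_* + ds^2$ and needs three ingredients: a Bending Lemma (Lemma \ref{bending.lemma}) that makes Schwarzschild strictly scalar positive on a small annulus near its horizon so as to create a margin on that side; an exact first-derivative matching $f_1'(b_1) = f_2'(a_2)$, obtained by \emph{tuning} $\varepsilon$ so that $\Gamma(\varepsilon) = (f_\varepsilon(T), f_\varepsilon'(T))$ aligns horizontally with the bent Schwarzschild curve; and the Gluing Lemma (Lemma \ref{gluing.lemma}), where the profiles are joined by linear interpolation plus mollification, the scalar-positivity margin $d$ surviving the mollification. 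You instead pass to the area-radius parametrization and encode the metric in $G(\phi) = \phi\,(\phi')^2 = \phi - 2 m_H$, so that $R \geq 0$ becomes the slope bound $dG/d\phi \leq 1$, mean convexity becomes $G > 0$, and the mass-$m$ Schwarzschild exterior is exactly the affine graph $G = \phi - 2m$; the strict inequality says the collar's end state $(r_0(\varepsilon), G = O(\varepsilon^2))$ sits above that line, and any smooth descent with slope in $(0,1]$ landing on the line finishes the job. This dispenses with the Bending Lemma, with the slope matching (any sufficiently small $\varepsilon$ works; no tuning), and with the mollification argument, and it isolates transparently where $2m > r_0(\varepsilon)$ enters; it is the cleaner route for this specific pair of profiles, whereas the paper's Gluing Lemma is stated for arbitrary convex scalar-positive profiles and avoids having to hit an exact integral. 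The one step you should write out is the existence of $G$ with the prescribed boundary behavior: fix $G \equiv G_{\mathrm{collar}}$ near $\phi = r_0(\varepsilon)$ and $G' \equiv 1$ near $\phi_1$, and adjust $G'$ in between (a one-parameter cutoff/intermediate-value argument) so that $G(\phi_1) = \phi_1 - 2m$ exactly; this is possible precisely because the achievable values of $\int_{r_0}^{\phi_1} G'\, d\phi$ sweep an interval containing $2m - G(r_0)$, i.e.\ because the interface Hawking mass is below $m$. With that line added (and noting that $R \geq 0$, which is all the theorem demands, is what you get on the stretch where $G' = 1$), the assembly of the three pieces and the verification of (i)--(iii) proceed exactly as in the paper.
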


\begin{rema}
By the maximum principle applied to the mean convex spherical foliation, $M^3$ will contain no interior closed minimal surfaces. In view of the Riemannian Penrose inequality $16 \, \pi \, m_{\operatorname{ADM}}(M^3)^2 \geq \area(g)$ this theorem lets us construct an almost minimizing sequence of smooth extensions of the Bartnik data $(\SS^2, g, H = 0)$. We conclude that $m_B(\SS^2, g, H = 0) = (\area(g)/16\pi)^{1/2}$ for all $g \in \sM_+$.
\end{rema}

\begin{rema}
We can also arrange for $\partial M^3$ to be totally geodesic as in the remark following the collar lemma, Lemma \ref{collar.lemma}.
\end{rema}

We need to first study the positive scalar curvature condition on topological cylinders $\SS^2 \times [0,T]$ endowed with metrics of the particular form $f(t)^2 \, g_* + dt^2$, where $g_*$ is a round metric on $\SS^2$ with area $4 \, \pi$. The function $f$ is required to be positive and smooth.

We introduce a piece of notation that will be used in this section. For $\alpha \in (0, \infty)$, $\beta \in \RR$, we write
\[ \Omega(\alpha, \beta) = (-\infty, \frac{1}{2} \, \alpha \, (1 - \alpha^{-2} \beta^2)) \text{,} \]
as shorthand for the corresponding open half-infinite interval in $\RR$. We will be using two basic facts about $\Omega(\alpha, \beta)$.

\begin{fact}
$\Omega(\alpha, \beta)$ shrinks as a set when $\alpha$ shrinks and $\beta$ is held fixed, or when $\beta$ grows and $\alpha$ is held fixed. In particular, if $P \in \RR_+ \times \RR$ is above and to the left of $P' \in \RR_+ \times \RR$, then $\Omega(P) \subseteq \Omega(P')$.
\end{fact}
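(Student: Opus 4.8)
The plan is to reduce the whole statement to a sign check on a single real-valued function, namely the right endpoint of the interval. Write $e(\alpha,\beta) \triangleq \tfrac12\,\alpha\,(1-\alpha^{-2}\beta^2) = \tfrac{\alpha}{2} - \tfrac{\beta^2}{2\alpha}$, so that $\Omega(\alpha,\beta) = (-\infty, e(\alpha,\beta))$, and for two parameter pairs one has the equivalence $\Omega(\alpha,\beta)\subseteq\Omega(\alpha',\beta') \iff e(\alpha,\beta)\le e(\alpha',\beta')$, with the inclusion strict exactly when the endpoint inequality is strict. Thus everything becomes a monotonicity statement for $e$ on $\RR_+\times\RR$.

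First I would record the two partial derivatives. One has $\partial_\alpha e = \tfrac12 + \tfrac{\beta^2}{2\alpha^2} \ge \tfrac12 > 0$, so for fixed $\beta$ the endpoint $e$ is strictly increasing in $\alpha$; hence $\Omega(\alpha,\beta)$ strictly shrinks as $\alpha$ shrinks. For the dependence on $\beta$ it is cleanest to view $e$ as a function of $s = \beta^2$, where $\partial_s e = -\tfrac{1}{2\alpha} < 0$; so for fixed $\alpha$ the endpoint $e$ is strictly decreasing in $\beta^2$, and $\Omega(\alpha,\beta)$ strictly shrinks as $|\beta|$ grows. This proves the first sentence.

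For the ``in particular'' clause, I would fix the orientation convention that the first coordinate $\alpha$ is drawn horizontally and the second coordinate $\beta$ vertically, so that $P=(\alpha,\beta)$ lying above and to the left of $P'=(\alpha',\beta')$ means $\alpha\le\alpha'$ and $\beta\ge\beta'$ (both nonnegative in the range where the Fact is applied), and in particular $\beta^2\ge\beta'^2$. Then one simply chains the two monotonicities: $e(\alpha,\beta)\le e(\alpha',\beta)$ using the first slot, and $e(\alpha',\beta)\le e(\alpha',\beta')$ using the second (in $\beta^2$), hence $e(P)\le e(P')$ and so $\Omega(P)\subseteq\Omega(P')$.

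There is no genuine obstacle: the content is just the explicit formula for $\Omega$ together with two derivative signs. The only point that deserves a word of care is that monotonicity ``in $\beta$'' should be read as monotonicity in $|\beta|$ (equivalently $\beta^2$); with that reading, and with the orientation convention above, ``above and to the left'' matches exactly the two shrinking directions established in the first part.
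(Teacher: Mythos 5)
Your proof is correct: the paper states this Fact without proof, and your verification via the endpoint function $e(\alpha,\beta)=\tfrac{\alpha}{2}-\tfrac{\beta^2}{2\alpha}$ and its two derivative signs is exactly the elementary argument the authors intend. Your caveat that ``$\beta$ grows'' must be read as $|\beta|$ (or $\beta\ge 0$) is well taken and consistent with how the Fact is used, since the curves $\Gamma$ and $\widetilde{\Delta}$ to which it is applied lie in the first quadrant.
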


\begin{fact}
The metric $f(t)^2 \, g_* + dt^2$ on $\SS^2 \times [0,T]$ has positive scalar curvature if and only if $f''(t) \in \Omega(f(t), f'(t))$ for all $t \in [0,T]$.
\end{fact}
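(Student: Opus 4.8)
The plan is to reduce the Fact to one warped-product curvature computation and then rearrange a single inequality.

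The only feature of $g_*$ that enters is that, being a round metric on $\SS^2$ of area $4 \, \pi$, it has constant Gauss curvature $\equiv 1$ by Gauss--Bonnet, hence scalar curvature $R_{g_*} \equiv 2$. With that in hand I would compute the scalar curvature of $\bar{g} = f(t)^2 \, g_* + dt^2$ on $\SS^2 \times [0,T]$, viewed as a warped product over the one-dimensional base $([0,T], dt^2)$ with fiber $(\SS^2, g_*)$ and warping function $f$. The general formula for a warped product $dt^2 + f(t)^2 \, h$ with $n$-dimensional fiber $(F,h)$,
\[ R_{\bar{g}} = f^{-2} \, R_h - 2 \, n \, f^{-1} \, f'' - n \, (n-1) \, f^{-2} \, (f')^2 \text{,} \]
may be quoted directly, or derived in a couple of lines from the Christoffel symbols $\bar\Gamma^{0}_{ij} = -f \, f' \, h_{ij}$, $\bar\Gamma^{i}_{0j} = (f'/f) \, \delta^i_j$, $\bar\Gamma^{k}_{ij} = \Gamma^k_{ij}(h)$ (equivalently via the O'Neill submersion formulas). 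Putting $n = 2$ and $R_h = R_{g_*} \equiv 2$ and simplifying collapses this to
\[ R_{\bar{g}}(t, \cdot) = \frac{2}{f(t)^2} \, \big( 1 - f'(t)^2 - 2 \, f(t) \, f''(t) \big) \text{,} \]
which, in particular, is constant on each slice $\{ t = t_0 \}$.

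Since $f > 0$, the sign of $R_{\bar{g}}$ at a parameter value $t$ is the sign of $1 - f'(t)^2 - 2 \, f(t) \, f''(t)$, so $R_{\bar{g}} > 0$ there precisely when $f''(t)$ lies below $\tfrac{1}{2} \, f(t)^{-1} \, (1 - f'(t)^2)$, i.e.\ exactly when $f''(t) \in \Omega(f(t), f'(t))$. Imposing this at every $t \in [0,T]$ is the same as asking that $\bar{g}$ have positive scalar curvature on all of $\SS^2 \times [0,T]$, which is the assertion of the Fact. Both implications are immediate from the displayed formula, so the argument is essentially complete once that formula is in place.

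I do not anticipate a genuine obstacle; the content is a direct curvature computation, and the only thing to be careful with is the sign and normalization conventions in the warped-product formula. As a check I would test it against $f(t) = t$ (flat $\RR^3$, $R_{\bar{g}} \equiv 0$), $f(t) = \sin t$ (unit $\SS^3$, $R_{\bar{g}} \equiv 6$), and $f(t) = \sinh t$ (hyperbolic $3$-space $\mathbb{H}^3$, $R_{\bar{g}} \equiv -6$); it is also reassuring that the exterior Schwarzschild metric of mass $m$, written in proper-distance form so that $f' = (1 - 2 \, m / f)^{1/2}$, satisfies $1 - (f')^2 - 2 \, f \, f'' \equiv 0$, i.e.\ runs along $\partial \Omega$, which is exactly the borderline behavior that will have to be handled in the gluing step of the next section.
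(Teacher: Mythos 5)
Your computation is correct and it is exactly the paper's argument: the paper offers no proof beyond citing the warped-product scalar curvature formula, and your derivation of $R = 2f^{-2}\big(1-(f')^2-2ff''\big)$ (with the fiber curvature $K_{g_*}\equiv 1$ coming from the area normalization) together with the sign rearrangement is precisely that. One point worth flagging: the threshold you obtain, $f''<\tfrac{1}{2}f^{-1}\big(1-(f')^2\big)$, does not match the paper's printed definition $\Omega(\alpha,\beta)=(-\infty,\tfrac{1}{2}\,\alpha\,(1-\alpha^{-2}\beta^2))$, whose supremum is $\tfrac{1}{2}\alpha^{-1}(\alpha^2-\beta^2)$; the two agree only when $\alpha=1$. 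Your version is the correct one --- your own Schwarzschild check shows it, since $u_m'=(1-2m/u_m)^{1/2}$, $u_m''=m/u_m^2$ gives $1-(u_m')^2-2u_mu_m''\equiv 0$, i.e.\ scalar flatness lands exactly on your boundary, whereas with the printed $\Omega$ Schwarzschild would lie strictly inside for $u_m>1$; moreover the proof of the bending lemma explicitly uses that this bracket vanishes for Schwarzschild. So the printed $\Omega$ (and the corresponding scalar-curvature display in the bending lemma) carries a typo, $\alpha$ for $\alpha^{-1}$, which you have in effect silently corrected; there is no gap in your argument.
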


The latter is a consequence of the formula for the scalar curvature of warped product metrics. Now follows a technical lemma that will be key in our proof of the main theorem.

\begin{lemm}[Gluing lemma]
\label{gluing.lemma}
Let $f_i : [a_i, b_i] \to \RR$, $i = 1$, $2$ be two smooth functions with
\begin{enumerate}[(I)]
\item $f_i$, $f_i'$, $f_i'' > 0$, $i = 1$, $2$,
\item $f_i''(t) \in \Omega(f_i(t), f_i'(t))$ for all $t \in [a_i, b_i]$, $i = 1$, $2$, and
\item which satisfy the compatibility conditions $f_1(b_1) < f_2(a_2)$ and $f_1'(b_1) = f_2'(a_2)$.
\end{enumerate}
Then, after translating the intervals so that $a_2 - b_1 = (f_2(a_2) - f_1(b_1))/f_1'(b_1)$, we can construct a smooth $f : [a_1, b_2] \to \RR$ such that:
\begin{enumerate}[(i)]
\item $f$, $f' > 0$ on $[a_1, b_2]$,
\item $f \equiv f_1$ on $[a_1, \frac{a_1+b_1}{2}]$,
\item $f \equiv f_2$ on $[\frac{a_2+b_2}{2}, b_2]$, and
\item $f(t)^2 \, g_* + dt^2$ is scalar positive on $\SS^2 \times [a_1, b_2]$.
\end{enumerate}
\end{lemm}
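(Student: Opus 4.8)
The plan is to keep $f\equiv f_1$ on $[a_1,b_1]$ and $f\equiv f_2$ on $[a_2,b_2]$ and to build $f$ on the middle interval $[b_1,a_2]$ by prescribing its second derivative. Write $c(\alpha,\beta)$ for the right endpoint of the half-line $\Omega(\alpha,\beta)$, so that the scalar-curvature criterion reads ``$f(t)^2\,g_*+dt^2$ is scalar positive $\iff f''(t)<c(f(t),f'(t))$ for all $t$''; by the monotonicity Fact, $c(\alpha,\beta)$ is increasing in $\alpha$ for $\beta$ fixed. Set $s=f_1'(b_1)=f_2'(a_2)>0$. Hypotheses (I)--(II) at the gluing points give $0<f_1''(b_1)<c(f_1(b_1),s)$ and $0<f_2''(a_2)<c(f_2(a_2),s)$. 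Let $L(t)=f_1(b_1)+s\,(t-b_1)$; the stated translation normalization is exactly the identity $L(a_2)=f_2(a_2)$, and since $f_1(b_1)<f_2(a_2)$ the line $L$ increases from $f_1(b_1)$ to $f_2(a_2)$ across $[b_1,a_2]$. Hence $t\mapsto c(L(t),s)$ is increasing, so it is $\geq c_0\triangleq c(f_1(b_1),s)>f_1''(b_1)>0$ on all of $[b_1,a_2]$. In other words the warped product built from $L$ alone is scalar positive with a definite margin; all the work is in rounding the two corners that $L$ has against $f_1$ at $b_1$ and against $f_2$ at $a_2$.

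Accordingly, define $f$ on $[b_1,a_2]$ by $f(b_1)=f_1(b_1)$, $f'(b_1)=s$, and $f''=q_\delta$, where $q_\delta=q_1+q_2+q_3$ depends on a small parameter $\delta>0$ as follows. Fix a cutoff $\psi$ equal to $1$ near $0$ and to $0$ on $[\tfrac12,\infty)$, and smooth functions $F_1,F_2$ on $\RR$ restricting to $f_1''$ on $[a_1,b_1]$ resp.\ to $f_2''$ on $[a_2,b_2]$ (Whitney extension). Put $q_1(t)=\psi((t-b_1)/\delta)\,F_1(t)$ and $q_2(t)=\psi((a_2-t)/\delta)\,F_2(t)$; then $q_1=F_1$ near $b_1$ and $q_1\equiv 0$ on $[b_1+\tfrac\delta2,a_2]$, symmetrically for $q_2$. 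Finally let $q_3=c_1\,\rho_1+c_2\,\rho_2$ be a combination of two fixed bump functions with disjoint supports inside a fixed compact subinterval of the interior of $[b_1,a_2]$, with $(c_1,c_2)$ obtained by inverting the fixed nonsingular $2\times2$ matrix of the moments of $\rho_1,\rho_2$ against $1$ and against $a_2-t$, so as to enforce $\int_{b_1}^{a_2}q_\delta\,dt=0$ and $\int_{b_1}^{a_2}(a_2-t)\,q_\delta(t)\,dt=0$. These two moment conditions give $f'(a_2)=s+\int q_\delta=s=f_2'(a_2)$ and, using $s\,(a_2-b_1)=f_2(a_2)-f_1(b_1)$, $f(a_2)=L(a_2)+\int(a_2-t)\,q_\delta\,dt=f_2(a_2)$; and since $q_\delta$ coincides with $F_1$ near $b_1$ and with $F_2$ near $a_2$, the full jet of $f$ at $b_1$ (resp.\ $a_2$) matches that of $f_1$ (resp.\ $f_2$), so $f\in C^\infty([a_1,b_2])$ and it satisfies (ii) and (iii).

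It remains to fix $\delta$ small so that (i) and (iv) hold. Because $\int_{b_1}^{a_2}|q_1|$ and $\int_{b_1}^{a_2}|q_2|$ are $O(\delta)$, the moment equations force $|c_1|,|c_2|=O(\delta)$; hence $q_\delta$ is $C^0$-bounded uniformly in $\delta$, equals $O(\delta)$ outside the $\delta$-neighborhoods of $b_1$ and $a_2$, obeys $q_\delta\leq f_1''(b_1)+O(\delta)$ on $[b_1,b_1+\tfrac\delta2]$ and $q_\delta\leq f_2''(a_2)+O(\delta)$ on $[a_2-\tfrac\delta2,a_2]$, and has $\int_{b_1}^{a_2}|q_\delta|=O(\delta)$. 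Consequently $f'=s+O(\delta)$ and $f=L+O(\delta)$ uniformly, which gives $f,f'>0$ (property (i)) and $c(f,f')=c(L(\cdot),s)+O(\delta)\geq c_0-O(\delta)$. For (iv) one checks $q_\delta<c(f,f')$ in three regions: in the bulk, $q_\delta=O(\delta)<c_0-O(\delta)$; near $b_1$, $q_\delta\leq f_1''(b_1)+O(\delta)<c_0-O(\delta)$ by the strict gap $c_0>f_1''(b_1)$; and near $a_2$, $q_\delta\leq f_2''(a_2)+O(\delta)$ while there $c(f,f')\geq c(L(\cdot),s)-O(\delta)\geq c(f_2(a_2),s)-O(\delta)$ (the increasing function $c(L(\cdot),s)$ attains its minimum over that $\delta$-neighborhood at the left endpoint, where it equals $c(f_2(a_2),s)-O(\delta)$), and $c(f_2(a_2),s)>f_2''(a_2)$ is again a fixed strict gap. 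Taking $\delta$ below a threshold depending only on $f_1,f_2$ makes all three inequalities strict; on $[a_1,b_1]\cup[a_2,b_2]$ properties (i), (iv) hold because $f=f_i$ there and $f_i$ satisfies (I)--(II). This proves the lemma.

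The main obstacle is (iv) on the two rounding regions, because $f''$ cannot be made small there: it must limit to $f_1''(b_1)>0$ near $b_1$ and to $f_2''(a_2)>0$ near $a_2$. The two devices that defeat this are (a) the equality $f_1'(b_1)=f_2'(a_2)$, which makes the straight line $L$ an admissible interpolating backbone with $c(L,s)$ bounded below by the fixed positive $c_0$; and (b) the cutoff structure of $q_\delta$, which keeps $f''$ within $O(\delta)$ of $0$ except in the tiny regions where it must approach $f_1''(b_1)$ or $f_2''(a_2)$ -- and on those regions $(f,f')$ is close to $(f_1(b_1),s)$ or $(f_2(a_2),s)$, so $c(f,f')$ is close to precisely the quantity that hypothesis (II) makes strictly larger than $f_i''$. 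One cannot expect a global bound $f_2''(a_2)<c_0$ (only $f_2''(a_2)<c(f_2(a_2),s)$, and $c(f_2(a_2),s)$ may exceed $c_0$), which is why $q_\delta$ must stay $O(\delta)$-small in the bulk and may grow toward $f_2''(a_2)$ only where $c(L(\cdot),s)$ has risen toward its larger end value. Finally, the strictness $f_1(b_1)<f_2(a_2)$ in (III) is what makes $L$ genuinely increasing and leaves the middle interval non-degenerate, so that the fixed bumps $\rho_1,\rho_2$ fit inside it.
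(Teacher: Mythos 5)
Your proof is correct, and it shares the paper's key geometric idea but implements it differently. Both arguments exploit the same structure: the translation normalization makes the straight line of slope $s=f_1'(b_1)=f_2'(a_2)$ through $(b_1,f_1(b_1))$ and $(a_2,f_2(a_2))$ the natural backbone, hypothesis (II) gives strict gaps $f_i''<\sup\Omega(f_i,f_i')$ at the two junctions, and in the bulk one works with $f''\approx 0$ while $\sup\Omega(L(t),s)\geq c_0>0$ (note that $c_0>0$, equivalently $f_1(b_1)>s$, is forced by (I) and (II) at $b_1$). The paper then simply concatenates $f_1$, the line, and $f_2$ into a $C^{1,1}$ function and mollifies, comparing the graph of the mollified second derivative with the graph of $t\mapsto\sup\Omega(f,f')$, which converges uniformly under $C^1$ convergence. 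You instead prescribe $f''$ directly: cutoffs of Whitney extensions of $f_1''$, $f_2''$ near the junctions plus a two-bump correction whose coefficients are chosen by inverting a fixed $2\times 2$ moment matrix so that $\int q_\delta=0$ and $\int (a_2-t)q_\delta=0$, which is exactly what matches $f$ and $f'$ to $f_2$ at $a_2$; jet-matching at the junctions then gives smoothness, and your three-region estimate (bulk, near $b_1$, near $a_2$) verifies $f''<\sup\Omega(f,f')$ for small $\delta$. Your route trades the paper's brevity for explicitness: it avoids the mild delicacy of a mollification that is required to leave the function unchanged on prescribed subintervals, and it yields the slightly stronger conclusion $f\equiv f_1$ on all of $[a_1,b_1]$ and $f\equiv f_2$ on all of $[a_2,b_2]$. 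The only points you should state explicitly are routine: the bumps $\rho_1,\rho_2$ can be fixed nonnegative with disjoint supports in a compact subinterval of $(b_1,a_2)$, so the moment matrix has determinant $\bigl(\int\rho_1\bigr)\bigl(\int\rho_2\bigr)$ times the difference of the $\rho_i$-averages of $a_2-t$, hence is nonsingular, and for $\delta$ small these supports are disjoint from the $\delta/2$-neighborhoods of $b_1,a_2$, so your case analysis is exhaustive.
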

\begin{proof}
Define $\widetilde{f} : [a_1, b_2] \to \RR$ to extend $f_1$ and $f_2$ by interpolating linearly between them. In view of the provided hypotheses, $\widetilde{f}$ is strictly increasing and $C^{1,1}[a_1,b_2]$. For $\sigma > 0$ let $f_\sigma : [a_1, b_2] \to \RR$ be obtained from $\widetilde{f}$ by mollification, leaving the function fixed on $[a_1, \frac{a_1+b_1}{2}]$ and $[\frac{a_2+b_2}{2}, b_2]$. We claim that $f_\sigma$, for $\sigma > 0$ small enough, can play the role of the postulated function $f$.

Write $\Omega[\widetilde{f}]$ for the Lipschitz-continuous graph of $[a_1, b_2] \ni t \mapsto \sup \Omega(\widetilde{f}(t), \widetilde{f}'(t))$; define $\Omega[f_\sigma]$ analogously. Write $T_i$, $i = 1, 2$, for the graph $f_i''$ over $[a_i, b_i]$ and $T_\sigma$ for that of $f_\sigma''$ over $[a_1, b_2]$.

In view of (II), $T_1 \cup T_2$ lies strictly below $\Omega[\widetilde{f}]$, and by (I) all lie above the $t$-axis. Write $d > 0$ for the minimal vertical distance between $T_1 \cup T_2$ and $\Omega[\widetilde{f}]$. Since $\widetilde{f} \in C^{1,1}$, we know that $f_\sigma \to f$ in $C^1$ and thus $\Omega[f_\sigma] \to \Omega[\widetilde{f}]$ uniformly, so for small enough $\sigma$ we can arrange for $\Omega[f_\sigma]$ to be within $d/2$ of $\Omega[\widetilde{f}]$. Again, since $\widetilde{f} \in C^{1,1}[a_1,b_2]$ and $C^2$ away from $\{a_2, b_1\}$, $\widetilde{f}'' > 0$ on $[a_1, b_1) \cup (a_2, b_2]$ and $\widetilde{f}'' = 0$ on $(a_2, b_1)$, for small enough $\sigma$ we know by properties of compactly supported mollifications that $T_\sigma$ lies within $d/2$ of $T_1 \cup T_2 \cup ([a_2,b_1] \times \{0\})$, and is therefore strictly below $\Omega[f_\sigma]$. The conclusion follows.
\end{proof}

Before we present the proof of the main theorem, we need to make a few remarks on Riemannian Schwarzschild metrics. We remind the reader that the Riemannian mass-$m$ Schwarzschild is a scalar-flat metric on $\SS^2 \times (2m, \infty)$ given by
\[ g_{S,m} = t^2 \, g_* + \Big( 1 - \frac{2m}{t} \Big)^{-1} \, dt^2 \text{,} \]
with $g_*$ a round metric on $\SS^2$ with area $4 \, \pi$. This is the exterior part of the static metric. It will be more convenient for us to view this in the form of the warped product metrics as needed in the gluing lemma; i.e.,
\[ g_{S,m} = u_m(s)^2 \, g_* + ds^2 \]
on $\SS^2 \times [0,\infty)$, where $s$ is the radial geodesic arclength, and $s = 0$ corresponds to the horizon. The function $u_m$ is manifestly seen to satisfy
\begin{equation}
\label{eq:schwarzschild.warped.prod.form}
\tag{$\dagger$}
u_m(0) = 2 \, m, \quad u_m'(0) = 0, \quad u_m'(s) = \Big( 1 - \frac{2m}{u_m(s)} \Big)^{1/2} \quad \text{and} \quad u_m''(s) = \frac{m}{u_m(s)^2} \quad \text{for } s > 0 \text{.}
\end{equation}

The end goal is to glue $g_{S,m}$ near some region $\{ s = s_0 \}$, $0 < s_0 \ll 1$, to the scalar positive collar $\SS^2 \times [0,1]$ constructed in the previous section. In order to do this we will first need to "bend" $g_{S,m}$ near $\{ s = s_0 \}$ and make it scalar positive on an annular region of its own. This positivity will afford us certain freedom in gluing.

\begin{lemm}[Bending lemma]
\label{bending.lemma}
Let $s_0 \in (0, \infty)$ be fixed. For $0 < \delta \ll 1$ there exists a smooth function $\sigma : [s_0 - \delta, \infty) \to (0,\infty)$ such that:
\begin{enumerate}[(i)]
\item $\sigma(s) = s$ for all $s \geq s_0$,
\item $\sigma$ is monotonically increasing, and
\item the metric $u_m(\sigma(s))^2 \, g_* + ds^2$ is scalar flat when $s \geq s_0$ and scalar positive when $s_0 - \delta \leq s < s_0$.
\end{enumerate}
\end{lemm}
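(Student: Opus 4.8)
The plan is to produce the bent metric by a reparametrization of the Schwarzschild profile: look for $\sigma$ with $\sigma(s)=s$ for $s\geq s_0$ and $\sigma(s)<s$ just below $s_0$, set $f(s)=u_m(\sigma(s))$, and take the candidate metric to be $f(s)^2\,g_*+ds^2$. By the Fact that $f(\cdot)^2\,g_*+ds^2$ has positive scalar curvature exactly where $f''\in\Omega(f,f')$ (and is scalar flat exactly where equality $f''=\sup\Omega(f,f')$ holds), requirement (iii) on the region $s\geq s_0$ is automatic: there $\sigma\equiv\mathrm{id}$ and the metric is literally $g_{S,m}$, which is scalar flat. So all the work is on $[s_0-\delta,s_0)$.

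First I would reduce the positivity condition to a differential inequality for $\sigma$ alone. Differentiating $f=u_m(\sigma)$ gives $f'=u_m'(\sigma)\,\sigma'$ and $f''=u_m''(\sigma)\,(\sigma')^2+u_m'(\sigma)\,\sigma''$; substituting the explicit relations $(\dagger)$, namely $u_m'(\sigma)^2=1-2m/u_m(\sigma)$ and $u_m''(\sigma)=m/u_m(\sigma)^2$, into the warped-product scalar curvature formula and simplifying, one finds that $f(s)^2\,g_*+ds^2$ has \emph{positive} scalar curvature at $s$ precisely when
\[ 2\,u_m(\sigma(s))\,u_m'(\sigma(s))\,\sigma''(s) < 1-\sigma'(s)^2 \text{,} \]
the borderline equality corresponding to the scalar-flat Schwarzschild case $\sigma=\mathrm{id}$. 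The coefficient $2\,u_m(\sigma)\,u_m'(\sigma)$ is smooth and, as long as $\sigma(s)$ stays within a fixed compact subinterval of $(0,\infty)$ near $s_0$ (which it will, once $\delta$ is small), it is bounded below by a fixed positive constant $c_0>0$; this is exactly where the hypothesis $s_0>0$ is used, since at the horizon $u_m'$ vanishes and the inequality would degenerate.

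It then remains to exhibit a smooth $\sigma$ satisfying (i), (ii) and this inequality on $[s_0-\delta,s_0)$. I would take $\sigma'(s)\equiv 1$ for $s\geq s_0$ and $\sigma'(s)=1+\rho(s)$ for $s<s_0$, where $\rho(s)=e^{-1/(s_0-s)}$ is the standard smooth function vanishing to infinite order at $s_0$. Then $\sigma'>0$ gives (ii), while $\sigma-\mathrm{id}$ vanishes to infinite order at $s_0$, so $\sigma$ is smooth on $[s_0-\delta,\infty)$ and equals the identity for $s\geq s_0$, which is (i). On $[s_0-\delta,s_0)$ one has $1-\sigma'(s)^2=-\rho(s)\,(2+\rho(s))<0$ and $\sigma''(s)=\rho'(s)=-\rho(s)/(s_0-s)^2<0$, so dividing the required inequality through by $-\rho(s)>0$ turns it into the elementary estimate
\[ 2\,u_m(\sigma(s))\,u_m'(\sigma(s)) > (2+\rho(s))\,(s_0-s)^2 \text{,} \]
whose right side is less than $3\,\delta^2$ and whose left side is at least $c_0>0$; hence it holds for every $\delta$ small enough. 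A final routine check that $\sigma(s)\geq s_0-\delta\,(1+e^{-1/\delta})$ stays positive (and within the claimed compact subinterval) completes the construction.

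The main obstacle is the matching at $s_0$: one must keep $\sigma$ equal to the identity to \emph{infinite} order across $s_0$ — so that the metric is genuinely scalar flat, and the later gluing smooth, for $s\geq s_0$ — while simultaneously retaining \emph{strict} scalar positivity on the approach $s\uparrow s_0$, precisely where the available room $1-\sigma'^2$ collapses to zero. The flat-to-the-right choice $\rho=e^{-1/(s_0-s)}$ is what threads this needle, because after simplification the obstruction amounts to comparing $(s_0-s)^2\to 0$ against the fixed positive quantity $2\,u_m\,u_m'$ away from the horizon, which shrinking $\delta$ dispatches. Everything else — the scalar curvature reduction, smoothness, monotonicity, and positivity of $\sigma$ — is routine.
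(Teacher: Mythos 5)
Your proposal is correct and follows essentially the same route as the paper: reduce the scalar positivity requirement, via the Schwarzschild relations \eqref{eq:schwarzschild.warped.prod.form}, to an ODE inequality of the form $c(s)\,\sigma'' < 1-(\sigma')^2$ with $c>0$ bounded below near $s_0>0$, and then take $\sigma'$ to be $1$ plus a positive bump vanishing to infinite order at $s_0$. The only differences are cosmetic: the paper uses the flat function $e^{-1/(s-s_0)^2}$ rather than your $e^{-1/(s_0-s)}$ and leaves the final verification as a direct check, which you carry out explicitly.
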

\begin{proof}
Recalling $R_{f(s)^2 \, g_* + ds^2} = 2 \, f^{-2} \big[ 1 - \, f^{-2} \, (f')^2  - 2 \, f^{-1} \, f'' \big]$, what we need to ensure is that for $s < s_0$,
\begin{align*} 
& 1 - u_m(\sigma(s))^{-2} \Big[ \frac{d}{ds} u_m(\sigma(s)) \Big]^2 - 2 \, u_m(\sigma(s))^{-1} \frac{d^2}{ds} u_m(\sigma(s)) > 0 \\
& \qquad \Leftrightarrow 1 - u_m^{-2} \, (u_m')^2 \, (\sigma')^2 - 2 \, u_m^{-1} \, u_m'' \, (\sigma')^2 - 2 \, u_m^{-1} \, u_m' \sigma'' > 0 \\
& \qquad \Leftrightarrow 1 - (\sigma')^2  + (\sigma')^2 \big[ 1 - u_m^{-2} \, (u_m')^2 - 2 \, u_m^{-1} \, u_m'' \big] - 2 \, u_m^{-1} \, u_m' \, \sigma'' > 0 \text{.} \\
\end{align*}
The term inside the bracket vanishes in view of Schwarzschild being scalar flat, so the inequality collapses to $1 - (\sigma')^2 - 2 \, u_m^{-1} \, u_m' \, \sigma'' > 0$. It will suffice to construct a smooth $\theta : [s_0 - \delta, s_0] \to \RR$ to play the role of $\sigma'$, provided it satisfies
\begin{enumerate}[(i')]
\item $\theta(s_0) = 1$, $\theta'(s_0) = \theta''(s_0) = \ldots = 0$,
\item $\theta > 0$ on $[s_0 - \delta, s_0]$, and
\item $1 - \theta^2 - 2 \, u_m^{-1} \, u_m' \, \, \theta' > 0$ on $[s_0 - \delta, s_0)$;
\end{enumerate}
having done so we integrate to recover $\sigma$ to the left of $s_0$, which can be smoothly extended to $\sigma(s) \equiv s$ to the right of $s_0$. One may check directly that $\theta(s) = 1 + e^{-1/(s-s_0)^2}$ for $s < s_0$, $\theta(s_0) = 1$, works.
\end{proof}

\begin{proof}[Proof of Theorem \ref{main.theorem}]
Start by constructing a scalar positive collar extension $(\SS^2 \times [0,1], \gamma_\varepsilon)$ of $g$ as in the previous section, with $\varepsilon \leq \varepsilon_0$ arbitrary for the time being. Let $g_*$ be a round metric on $\SS^2$ with area $4 \, \pi$ and such that $g(t) \equiv \rho^2 \, g_*$ for $t \in [1/2,1]$ on the collar extension. Since $u(t, \cdot) \equiv T > 0$ for $t \in [1/2, 1]$, changing variables to $s = T \, t$ makes the $t \in [1/2, 1]$ deformation region of the collar $\SS^2 \times [0,1]$ look like
\[ \gamma_\varepsilon = (1 + \varepsilon \, T^{-2} \, s^2) \, \rho^2 \, g_* + ds^2 \text{,} \]
with $s \in [T/2, T]$. Write $f_\varepsilon(s) = (1 + \varepsilon \, T^{-2} \, s^2)^{1/2} \, \rho$. Clearly
\begin{equation} 
\label{eq:collar.region.warped.prod.form}
\tag{$\dagger'$}
f_\varepsilon'(s) = (1 + \varepsilon \, T^{-2} \, s^2)^{-1/2} \, \varepsilon \, T^{-2} \, \rho \, s \text{ and } f_\varepsilon''(s) = (1+ \varepsilon \, T^{-2} \, s^2)^{-3/2} \, \varepsilon \, T^{-2} \, \rho
\end{equation}
hold true. Furthermore
\begin{equation} 
\label{eq:collar.region.curve.slope}
\tag{$\ddagger$}
\frac{f_\varepsilon'(T)}{f_\varepsilon(T)} = \frac{\varepsilon \, T^{-1}}{1 + \varepsilon} = \frac{1}{T} \, \Big( 1 - \frac{1}{1+ \varepsilon} \Big) \text{,}
\end{equation}
i.e., the slope traced out by the curve $\varepsilon \mapsto \Gamma(\varepsilon) \triangleq (f_\varepsilon(T), f_\varepsilon'(T))$ that lies on the first quadrant is decreasing as $\varepsilon \downarrow 0$. The curve converges as $\varepsilon \downarrow 0$ to the point
\[ \lim_{\varepsilon \downarrow 0} \Gamma(\varepsilon) = (\rho, 0) = \Big( \sqrt{\frac{\area(g(0))}{4 \, \pi}}, 0 \Big) \text{.} \]

Let $m$ be as in the statement of the theorem. Notice that the curve $s \mapsto \Delta(s) \triangleq (u_m(s), u_m'(s))$ that also lies on the first quadrant converges to $(2 \, m, 0)$ as $s \downarrow 0$. This point lies to the right of the curve $\Gamma$ by our choice of $m$, so for small enough $s_0 > 0$ curve $[0,s_0] \ni s \mapsto \Delta(s)$ lies strictly below and to the right of the segment $[0, \varepsilon_0] \ni \varepsilon \mapsto \Gamma(\varepsilon)$.

Apply the bending lemma (Lemma \ref{bending.lemma}), modify the curve $\Delta(s)$ on a small time interval $[s_0 - \delta, s_0]$ and call the new curve $[s_0 - \delta, s_0] \ni s \mapsto \widetilde{\Delta}(s) = (u_m(\sigma(s)), (u_m(\sigma(s)))')$. For small enough $\delta > 0$ the curve $\widetilde{\Delta}$ will continue to be below and to the right of $\Gamma$.

Finally we claim that we may apply the gluing lemma (Lemma \ref{gluing.lemma}) to form a scalar positive bridge between the collar region and the bent Schwarzschild exterior region.

The role of $f_2$ in the gluing lemma will be played by $[s_0 - \delta, s_0] \ni s \mapsto u_m(\sigma(s))$. Condition (I) of the lemma is satisfied by virtue of \eqref{eq:schwarzschild.warped.prod.form} after possibly shrinking $\delta > 0$ some more to retain strict convexity. Condition (II) is guaranteed to hold by the bending lemma, which bends a thin annular region of Schwarzschild to positive scalar curvature.

The role of $f_1$ will be played by $[T/2, T] \ni s \mapsto f_\varepsilon(s)$, for an $\varepsilon \leq \varepsilon_0$ that is to be determined. Condition (I) of the lemma is satisfied by virtue of \eqref{eq:collar.region.warped.prod.form}. Condition (II) is guaranteed to hold since the collar is scalar positive.

Having already chosen $\delta$, we choose $\varepsilon \leq \varepsilon_0$ to be such that $\Gamma(\varepsilon)$ and $\widetilde{\Delta}(s_0-\delta)$ align horizontally. There is a unique such choice in view of \eqref{eq:collar.region.curve.slope}. This fulfills the matching slope half of condition (III) in the lemma. The remaining half of (III) is fulfilled automatically since $\Gamma$ lies to the 
left of $\widetilde{\Delta}$.

The gluing lemma then creates a scalar positive bridge $f(s)^2 \, g_* + ds^2$ that preserves the $t \leq 1/2$ collar region and the $s \geq s_0$ exterior Schwarzschild region. It also guarantees a mean convex foliation since $f' > 0$. The result follows.
\end{proof}

\section{New examples of horizons}

\label{sec:horizon.examples}

Sections \ref{sec:collar.extensions} and \ref{sec:gluing.schwarzschild} provide a complete characterization of non-degenerate apparent horizons, showing they are in bijective correspondence with $\sM_+$. While analytically complete, the $\sM_+$ characterization has the disadvantage of not being geometrically transparent. The examples of horizons discussed in the literature usually have positive curvature (see, e.g., \cite{lin-sormani-ricci-flow}, \cite{bartnik-quasispherical-metrics}), which seems in no way necessary given the operator. In this section we show that horizons with arbitrarily large negative integral curvature exist and in fact form a dense subset among all non-degenerate horizons in the appropriate topology.

\begin{theo}
\label{negative.curv.eigenvalue.metrics}
The set $\sM_+$ is relatively open in every pointwise conformal class of metrics, with respect to the uniform $C^1$ topology. Furthermore, for every $c > 0$ the subset of $\sM_+$ consisting of metrics $g$ with $\int_{\SS^2} (K_g)_- \, dA_g \geq c$ is $C^1$ dense in $\sM_+$ where $(K_g)_-=\max\{0,-K_g\}$.
\end{theo}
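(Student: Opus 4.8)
The plan is to prove the two assertions in turn. For the openness statement, fix a pointwise conformal class, say $g = e^{2w} g_0$ for a fixed background $g_0$, and view $w \mapsto \lambda_1(-\Delta_{e^{2w}g_0} + K_{e^{2w}g_0})$. Using the conformal transformation laws $dA_{e^{2w}g_0} = e^{2w}\,dA_{g_0}$, $K_{e^{2w}g_0}\,e^{2w} = K_{g_0} - \Delta_{g_0} w$, and $\Delta_{e^{2w}g_0} = e^{-2w}\Delta_{g_0}$, the Rayleigh quotient for the operator becomes
\[
\frac{\int |\nabla^{g_0} f|_{g_0}^2 + (K_{g_0} - \Delta_{g_0} w)\,f^2 \, dA_{g_0}}{\int e^{2w} f^2 \, dA_{g_0}} \text{,}
\]
so that $\lambda_1 > 0$ is equivalent to positivity of the numerator for all $f \not\equiv 0$, i.e. to positivity of the Schrödinger operator $-\Delta_{g_0} + (K_{g_0} - \Delta_{g_0} w)$ on $(\SS^2, g_0)$. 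The point is that the potential $K_{g_0} - \Delta_{g_0} w$ depends on $w$ only through $\Delta_{g_0} w$, which is continuous in $w$ with respect to the $C^1$ (indeed $C^2$) topology — wait, $\Delta_{g_0} w$ involves second derivatives. The right way is to integrate by parts: the numerator equals $\int |\nabla^{g_0} f|^2 + K_{g_0} f^2 \, dA_{g_0} + \int \langle \nabla^{g_0} w, \nabla^{g_0}(f^2)\rangle \, dA_{g_0}$, which now depends on $w$ only through $\nabla^{g_0} w$ and is therefore continuous in the $C^1$ topology on $w$ (uniformly in $f$ with unit $L^2(g_0)$ norm, after bounding $\|f\|_{H^1}$ in terms of the Rayleigh quotient). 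Hence $\lambda_1$ depends continuously on $w \in C^1$, and the positivity set is open.

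For the density statement, the idea is to perturb a given $g \in \sM_+$ conformally by adding a small, highly oscillatory bump to the conformal factor, which forces a large negative curvature contribution while moving the metric only slightly in $C^1$ and — by the openness just proved — keeping it in $\sM_+$. Concretely, start from $g = e^{2w} g_0 \in \sM_+$ and consider $g_\eta = e^{2(w + \eta \phi)} g_0$ where $\phi$ is a fixed smooth function supported in a small coordinate disk, chosen so that $\int_{\SS^2} (\Delta_{g_0}\phi)_+ \, dA_{g_0}$ is as large as we like relative to $\|\phi\|_{C^1}$ — this is possible because a single smooth bump of small $C^1$ norm can have arbitrarily large second derivative of one sign on part of its support; iterating with many disjoint small bumps, or rescaling the bump's horizontal size, makes $\int (\Delta_{g_0} \phi)_+$ grow without bound while $\|\phi\|_{C^1} \to 0$. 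Since $K_{g_\eta} e^{2(w+\eta\phi)} = K_{g_0} - \Delta_{g_0} w - \eta \Delta_{g_0}\phi$, on the region where $\Delta_{g_0}\phi$ is large and positive the Gauss curvature $K_{g_\eta}$ becomes very negative, and
\[
\int_{\SS^2} (K_{g_\eta})_- \, dA_{g_\eta} = \int_{\SS^2} (K_{g_\eta})_- e^{2(w+\eta\phi)} \, dA_{g_0} \geq \eta \int_{\SS^2} (\Delta_{g_0}\phi)_+ e^{2(w+\eta\phi)}\,dA_{g_0} - C
\]
can be made $\geq c$. Meanwhile $g_\eta \to g$ in $C^1$ as $\eta \to 0$ with $\phi$ scaled appropriately, so by openness $g_\eta \in \sM_+$ for the chosen small $\eta$.

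The main obstacle, and the point requiring care, is the competition between the two requirements on the bump: we need $\|\eta\phi\|_{C^1}$ small (so that $g_\eta$ stays $C^1$-close to $g$, hence in $\sM_+$ by openness) while $\eta \int (\Delta_{g_0}\phi)_+ \, dA_{g_0}$ stays large. I would resolve this by a two-parameter construction: take $\phi = \phi_{\ell}$ a bump of amplitude $a_\ell$ on a disk of radius $r_\ell$ tiled by many ($N_\ell \sim r_\ell^{-2}$) disjoint sub-bumps of radius $\sim r_\ell / \sqrt{N_\ell}$, and balance $a_\ell, r_\ell, N_\ell$ so that the $C^0$ and $C^1$ norms of $\eta\phi_\ell$ tend to zero (amplitude and first derivatives controlled) while $\eta \sum \int (\Delta_{g_0} \text{sub-bump})_+ $ stays bounded below by $c$; a single sub-bump on a disk of radius $r$ with fixed profile and amplitude $\sim r^{2}$ has $C^1$ norm $\sim r$ but $\int (\Delta)_+ \sim 1$, so summing $N \sim r^{-2}$ of them on disjoint disks of radius $r$ inside a fixed small region gives $C^1$ norm still $\sim r \to 0$ but total $\int (\Delta)_+ \sim N r^2 \cdot r^{-2}$... one must track the scaling exponents carefully here. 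Once the elementary scaling bookkeeping is done correctly, the argument closes: pick the bump making the negative-curvature integral $\geq 2c$, then shrink $\eta$ until $g_\eta$ is $C^1$-close enough to $g$ to lie in $\sM_+$ while the integral is still $\geq c$ by continuity of $\int (K_{g_\eta})_- \, dA_{g_\eta}$ in $\eta$.
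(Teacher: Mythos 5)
Your openness argument is essentially correct, though it takes a different (and perfectly legitimate) route from the paper: you work with the conformally transformed quadratic form $\int |\nabla^{g_0} f|^2 + K_{g_0}f^2 + 2f\,\langle \nabla^{g_0}w,\nabla^{g_0}f\rangle\,dA_{g_0}$ and perturb it, whereas the paper first proves a pointwise maximum-principle criterion ($g=e^{2w}g_*\in\sM_+$ iff $\cQ_w\phi>0$ for some $\phi$) and perturbs that, so only $|\nabla^*(w-v)|_*$ enters. For your version to close you do need the coercivity step you mention in passing — from $\lambda_1>0$ and boundedness of the potential one gets $Q(f)\ge c\,\|f\|_{H^1(g_0)}^2$, and then the perturbation term is absorbed when $\|\nabla(w'-w)\|_\infty<c$ — since strict positivity of $Q$ on each $f\neq 0$ alone is not an open condition. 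Make that explicit and the first half is fine.

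The density half has a genuine gap, and it sits exactly where you flag uncertainty. The whole argument hinges on producing conformal perturbations $\phi$ for which the ratio $\int (\Delta_{g_0}\phi)_+\,dA_{g_0}\,/\,\|\phi\|_{C^1}$ is arbitrarily large: the $C^1$ size of the perturbation is capped by the openness radius around $g$ (a fixed constant), so the curvature gain must come from that ratio blowing up. Your scaling does not deliver this. For a rescaled bump $\phi=a\,\chi(x/r)$ with fixed profile $\chi$, one has $\int(\Delta\phi)_+\,dA = a\int(\Delta\chi)_+\,dA$, i.e.\ the integral scales like the \emph{amplitude}, not like $1$ as you claim (you are conflating the pointwise size $a/r^2$ of $\Delta\phi$ with its integral over the disk of area $\sim r^2$). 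With amplitude $a\sim r^2$ each sub-bump contributes $\sim r^2$, and packing $N\sim r^{-2}$ disjoint copies into a fixed region gives a \emph{bounded} total, while the $C^1$ norm $\sim r\to 0$; so the construction as written produces no large negative curvature. Relatedly, your final step — ``pick $\phi$ with integral $\ge 2c$, then shrink $\eta$ until $g_\eta\in\sM_+$, the integral staying $\ge c$ by continuity'' — has the quantifiers in the wrong order: shrinking $\eta$ kills $\eta\int(\Delta\phi)_+$ linearly and there is no reason the integral stays above $c$ (for $g$ round it tends to $0$). The repair is to fix the admissible $C^1$ size \emph{first} (it is independent of the oscillation parameter, by your own openness estimate) and then let the oscillation frequency go to infinity at that fixed $C^1$ size, using a sign-oscillating perturbation rather than single-signed bumps of fixed profile: the paper takes $h=-n^{-1}\cos(n\theta)$, so $\|\alpha h\|_{C^0}\sim\alpha/n$, $|\nabla^*(\alpha h)|_*\le\alpha$, while $\Delta_*(\alpha h)\ge \tfrac12\alpha n$ on a band of area bounded below independently of $n$, whence $\int(K_g)_-\,dA_g\gtrsim \alpha n\to\infty$ with $\alpha$ fixed small. (In your bump language this corresponds to amplitude $\sim\epsilon r$ with $\epsilon$ fixed by openness and $r\to0$, giving total $\sim\epsilon/r\to\infty$ — not amplitude $\sim r^2$.) Note also that density in $C^1$ only requires the perturbed metric to lie within a prescribed $C^1$ distance of $g$, not that the perturbation tend to $0$, so fixing the $C^1$ size at the openness threshold is permissible; and the curvature identity you want is $\int(K_{g_\eta})_-\,dA_{g_\eta}=\int\big(K_{g_0}-\Delta_{g_0}w-\eta\,\Delta_{g_0}\phi\big)_-\,dA_{g_0}$, with no leftover conformal factor, which simplifies your estimate.
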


\begin{rema}
Once we have established that a metric is in $\sM_+$ we can extend it to an initial data set using Theorem \ref{main.theorem}. Alternatively, we can perform the extension in \cite{smith-black-holes-prescribed-full}.
\end{rema}

This theorem requires that we gain further insight on $\sM_+$ or, equivalently, the operator $L_g = -\Delta_g + K_g$. For $w \in C^\infty(\SS^2)$ and a round metric $g_*$ of area $4 \, \pi$ we will be interested in the quasilinear elliptic operator
\[ \cQ_w \phi = -\Delta_* \phi - |\nabla^* (\phi - w)|_{*}^2 + 1 \text{.} \]
The following proposition relates $\sM_+$ with $\cQ_w$.

\begin{prop}
\label{c1.characterization.mplus}
The metric $g = e^{2w} \, g_*$ is in $\sM_+$ if and only if $\cQ_w \phi > 0$ on $\SS^2$ for some $\phi \in C^\infty$.
\end{prop}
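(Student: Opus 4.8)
The plan is to translate the eigenvalue condition defining $\sM_+$ into a positivity statement for a conformally rescaled Schr\"odinger operator on the round sphere, and then to convert that linear inequality into the quasilinear inequality $\cQ_w \phi > 0$ by an exponential substitution. First I would record the two-dimensional conformal transformation laws for $g = e^{2w} g_*$: the Dirichlet energy is conformally invariant, $|\nabla^g f|_g^2 \, dA_g = |\nabla^* f|_*^2 \, dA_*$; the Gauss curvature transforms as $K_g = e^{-2w}(1 - \Delta_* w)$ because $g_*$ is round of area $4\pi$ and hence $K_* \equiv 1$; and $dA_g = e^{2w}\, dA_*$. Combining these, the Rayleigh numerator of $L_g = -\Delta_g + K_g$ satisfies
\[ \int_{\SS^2} \big( |\nabla^g f|_g^2 + K_g \, f^2 \big)\, dA_g = \int_{\SS^2} \big( |\nabla^* f|_*^2 + (1 - \Delta_* w)\, f^2 \big)\, dA_* \]
for every $f \in C^\infty(\SS^2)$. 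The two sides have different denominators, $\int f^2 \, dA_g$ versus $\int f^2 \, dA_*$, so the eigenvalues themselves are not equal; but since the numerator is unchanged and both denominators are positive for $f \not\equiv 0$, positivity of $\lambda_1(L_g)$ is equivalent to positivity of $\lambda_1(-\Delta_* + V)$ computed with respect to $g_*$, where $V \triangleq 1 - \Delta_* w$. So the proposition reduces to: $\lambda_1(-\Delta_* + V) > 0$ iff $\cQ_w\phi > 0$ for some $\phi \in C^\infty$.

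The bridge between the two is the substitution $u = e^{\phi - w}$. A direct computation, using $\Delta_*(e^v) = e^v(\Delta_* v + |\nabla^* v|_*^2)$ with $v = \phi - w$ and $-\Delta_*\phi = -\Delta_*(\phi - w) - \Delta_* w$, gives the pointwise identity
\[ (-\Delta_* + V)\big(e^{\phi - w}\big) = e^{\phi - w}\big( -\Delta_*\phi - |\nabla^*(\phi - w)|_*^2 + 1 \big) = e^{\phi - w}\, \cQ_w\phi \text{.} \]
With this in hand both implications are immediate. For the "only if" direction I would take $g \in \sM_+$, so $\lambda_1 \triangleq \lambda_1(-\Delta_* + V) > 0$, let $u_1 > 0$ be the (smooth, sign-definite) first eigenfunction, and set $\phi \triangleq w + \log u_1 \in C^\infty$; the identity then yields $\cQ_w\phi = u_1^{-1}(-\Delta_* + V)u_1 = \lambda_1 > 0$. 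For the "if" direction, given $\phi$ with $\cQ_w\phi > 0$, the function $e^{\phi - w}$ is a strict positive supersolution of $-\Delta_* + V$, and testing against the positive first eigenfunction $\varphi_1$ and integrating by parts on the closed surface $\SS^2$ gives
\[ \lambda_1 \int_{\SS^2} \varphi_1 \, e^{\phi - w} \, dA_* = \int_{\SS^2} \varphi_1 \, (-\Delta_* + V)\big(e^{\phi - w}\big)\, dA_* = \int_{\SS^2} \varphi_1 \, e^{\phi - w}\, \cQ_w\phi \, dA_* > 0 \text{,} \]
forcing $\lambda_1 > 0$, i.e. $g \in \sM_+$; this is just the standard "positive strict supersolution implies positive bottom of the spectrum" principle.

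I do not anticipate a real obstacle; the proof is essentially bookkeeping of the conformal change plus the exponential substitution. The two points deserving a word of care are: (a) it is positivity of $\lambda_1$, not equality of eigenvalues, that is preserved under the conformal rescaling, which is why I pass through the Rayleigh numerator rather than the operators themselves; and (b) the first eigenfunction of $-\Delta_* + V$ on $\SS^2$ is smooth and may be chosen strictly positive, which is classical (elliptic regularity together with the strong maximum principle applied to $|\varphi_1|$, itself again a minimizer of the Rayleigh quotient), so that $\log u_1$ is a legitimate smooth function.
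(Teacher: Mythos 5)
Your proposal is correct and follows essentially the same route as the paper: both rest on the equivalence of $\lambda_1 > 0$ with the existence of a smooth positive strict supersolution, converted into the inequality $\cQ_w \phi > 0$ via the exponential substitution together with the conformal transformation law for $g = e^{2w} g_*$. The only minor variations are that you perform the conformal reduction first and prove the "supersolution implies $\lambda_1>0$" step by pairing with the positive first eigenfunction and integrating by parts, whereas the paper works directly with $L_g$ and uses a pointwise touching/maximum-principle argument; both are sound.
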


\begin{rema}
Choosing $\phi \equiv w$ we recover the known fact that every metric $g$ with positive curvature is in $\sM_+$. The proposition is a significant strengthening of this.
\end{rema}

\begin{proof}[Proof of Proposition \ref{c1.characterization.mplus}]
We first claim that
\[ g \in \sM_+ \text{ if and only if there exists a } f \in C^\infty, \; f > 0, \text{ with } L_g f > 0 \text{.} \]

The forward direction is clear: pick $f$ to be the first eigenfunction of $L_g$. The backward direction goes by contradiction. Suppose $f > 0$ is as described but that $g \not \in \sM_+$. Then $\lambda_1(L_g) \leq 0$ so the first eigenfunction $h > 0$ of $L_g$ satisfies $L_g h = \lambda \, h$ with $\lambda \leq 0$. Since $\SS^2$ is compact and neither one of $f$, $h$ vanishes, there must exist a constant $c > 0$ and a point $x_0 \in \SS^2$ such that $f - c \, h \geq 0$ and $f(x_0) = c \, h(x_0)$. The map $f - c \, h$ attains a global minimum of zero at $x_0$, so $L_g(f - c \, h)(x_0) \leq 0 \Leftrightarrow L_g f(x_0) \leq c \, L_g h(x_0)$, which is impossible because the left hand side is positive and the right hand side is non-positive. This confirms the claim.

Since every smooth $f > 0$ can be written as $e^\zeta$, $L_g f = L_g e^\zeta = e^\zeta \, \big( -\Delta_g \zeta - |\nabla^g \zeta|_g^2 + K_g \big)$, so the previous claim can be restated as:
\[ g \in \sM_+ \text{ if and only if there exists } \zeta \in C^\infty \text{ with } -\Delta_g \zeta - |\nabla^g \zeta|_g^2 + K_g > 0 \text{.} \]

Finally, writing $g = e^{2w} \, g_*$ for a round metric $g_*$ of area $4 \, \pi$ we see that
\begin{align*}
& -\Delta_g \zeta - |\nabla^g \zeta|_g^2 + K_g \\
& \qquad = e^{-2w} \, \big( - \Delta_* \zeta - |\nabla^* \zeta|_{*}^2 + 1 - \Delta_* w \big) \\
& \qquad = e^{-2w} \, \big( -\Delta_* (\zeta + w) - |\nabla^* \zeta|_{*}^2 + 1 \big) \text{,}
\end{align*}
and the conclusion follows with the particular choice $\phi = \zeta + w$.
\end{proof}

\begin{proof}[Proof of Theorem \ref{negative.curv.eigenvalue.metrics}]
Fix $v \in C^\infty$ with $e^{2v} \, g_* \in \sM_+$. By Proposition \ref{c1.characterization.mplus} there must exist $\phi \in C^\infty$ with $\cQ_v \phi > 0$. For any $w \in C^\infty$ and the same unit round metric $g_*$,
\begin{align*}
\cQ_w \phi - \cQ_v \phi & = \big( - \Delta_* \phi - |\nabla^* (\phi-w)|_{*}^2 + 1 \big) - \big( - \Delta_* \phi - |\nabla^* (\phi-v)|_{*}^2 + 1 \big) \\
    & = |\nabla^* (\phi-v)|_{*}^2 - |\nabla^* (\phi-w)|_{*}^2 \\
    & = g_*(\nabla^* (2\phi - w - v), \nabla^* (w-v)) \\
    & \geq - |\nabla^* (2\phi-w-v)|_{*} \, |\nabla^* (w-v)|_{*} \text{,}
\end{align*}
so $\cQ_w \phi \geq \cQ_v \phi - |\nabla^* (2\phi-w-v)|_{*} \, |\nabla^* (w-v)|_{*}$ is positive provided $|\nabla^* (w-v)|_{*}$ is small enough.

Next we need to establish the density claim. Let $v$, $\phi$ be as before, and $n \in \{1, 2, \ldots\}$, $\alpha \in (0,1)$ be parameters that are to be determined. Without loss of generality write $g_* = d\theta^2 + \sin^2 \theta \, d\phi^2$. Define $h = - n^{-1} \, \cos(n\theta)$, which satisfies $\nabla^* h = \sin (n\theta) \, \frac{\partial}{\partial \theta}$ and $\Delta_* h = n \, \cos(n\theta) + \sin (n\theta) \, \cot \theta$ and extends smoothly across the north ($\theta = 0$) and south ($\theta = \pi$) poles. We now choose $\alpha \in (0,1)$ to be small enough that $e^{2(v + \alpha h)} \, g_* \in \sM_+$, by $C^1$ openness. This can be done independently of $n$. The metric $g = e^{2(v + \alpha h)} \, g_*$ satisfies
\[ \int_{\SS^2} (K_g)_- \, dA_g = \int_{\SS^2} \big( 1 - \Delta_* v - \alpha \, \Delta_* h \big)_- \, dA_* = \int_{\SS^2} \big( 1 - \Delta_* v - \alpha \, n \, \cos(n\theta) - \alpha \, \sin (n\theta) \, \cot \theta \big)_- \, dA_* \text{.} \]
If $D = \{ \pi/3 \leq \theta \leq 2\pi/3 \}$ then $1 - \Delta_* v - \alpha \, \sin (n\theta) \, \cot(\theta) \leq \Lambda = \Lambda(v, \alpha)$ on $D$ and $\cos(n\theta) \geq 1/2$ on a set $D_n \subset D$ with $\area(D_n, g_*) \geq \mu > 0$ independently of $n$. Altogether this gives
\[ \int_{\SS^2} (K_g)_- \, dA_g \geq \int_{D_n} \big( \Lambda - \frac{1}{2} \, \alpha \, n \big)_- \, dA_* \geq \mu \, \big( \Lambda - \frac{1}{2} \, \alpha \, n \big)_- \text{,} \]
which can be made arbitrarily large by sending the still free parameter $n \uparrow \infty$. The density claim follows, since this can be done for all small enough $\alpha \in (0,1)$.
\end{proof}

There is an immediate corollary to the computation performed to check for relative openness above.

\begin{coro}
If $g_*$ is a round metric with area $4 \, \pi$ and $w \in C^\infty$ is such that $|\nabla^* w|_{*} < 1$, then $e^{2w} \, g_* \in \sM_+$.
\end{coro}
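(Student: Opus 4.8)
The plan is to reduce the statement to Proposition \ref{c1.characterization.mplus} by exhibiting a single explicit test function. According to that proposition, $g = e^{2w} \, g_* \in \sM_+$ as soon as we can find some $\phi \in C^\infty(\SS^2)$ with $\cQ_w \phi > 0$ everywhere, where $\cQ_w \phi = -\Delta_* \phi - |\nabla^*(\phi - w)|_*^2 + 1$. The natural candidate here is the identically-zero function $\phi \equiv 0$: with that choice $-\Delta_* \phi$ vanishes, $|\nabla^*(\phi - w)|_* = |\nabla^* w|_*$, and hence $\cQ_w 0 = 1 - |\nabla^* w|_*^2$. Since $\SS^2$ is compact and $w$ is smooth, the hypothesis $|\nabla^* w|_* < 1$ forces $\sup_{\SS^2} |\nabla^* w|_* < 1$, so $\cQ_w 0 = 1 - |\nabla^* w|_*^2 > 0$ uniformly on $\SS^2$. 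Proposition \ref{c1.characterization.mplus} then immediately gives $e^{2w} \, g_* \in \sM_+$.

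Equivalently — and this is the sense in which it is a corollary of the openness computation above — one may take $v \equiv 0$ and $\phi \equiv 0$ in that computation: there $\cQ_0 \phi = 1 > 0$ (the round metric is in $\sM_+$), and the displayed identity $\cQ_w \phi - \cQ_v \phi = g_*\big(\nabla^*(2\phi - w - v), \nabla^*(w - v)\big)$ specializes to $\cQ_w 0 - \cQ_0 0 = g_*(\nabla^*(-w), \nabla^* w) = -|\nabla^* w|_*^2$, so $\cQ_w 0 = 1 - |\nabla^* w|_*^2$, and we conclude as before.

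There is essentially no obstacle: the only point requiring a moment's thought is the choice of the test function, and the constant $0$ works precisely because the quadratic gradient term in $\cQ_w$ enters with a favorable sign. I would write the whole argument in two or three sentences, invoking Proposition \ref{c1.characterization.mplus} directly rather than re-deriving the conformal identities.

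\begin{proof}[Proof of Corollary]
By Proposition \ref{c1.characterization.mplus} it suffices to produce $\phi \in C^\infty(\SS^2)$ with $\cQ_w \phi > 0$ on $\SS^2$. Take $\phi \equiv 0$. Then
\[ \cQ_w 0 = -\Delta_* 0 - |\nabla^*(0 - w)|_*^2 + 1 = 1 - |\nabla^* w|_*^2 \text{.} \]
Since $|\nabla^* w|_* < 1$ on the compact manifold $\SS^2$, this quantity is strictly positive everywhere, and therefore $e^{2w} \, g_* \in \sM_+$.
\end{proof}
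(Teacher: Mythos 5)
Your proof is correct and follows essentially the same route as the paper: the paper's own argument is exactly the specialization $v=\phi=0$ of the openness computation, which reduces to your direct evaluation $\cQ_w 0 = 1 - |\nabla^* w|_*^2 > 0$ and an appeal to Proposition \ref{c1.characterization.mplus}.
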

\begin{proof}
Choose $v = \phi = 0$ in the computation carried out in the proof of the theorem. It follows that $\cQ_w 0 \geq \cQ_0 0 - |\nabla^* w|_{*}^2 = 1 - |\nabla^* w|_{*}^2$, which is clearly positive when $|\nabla^* w|_* < 1$.
\end{proof}

\section{Gibbons' formulation of Thorne's hoop conjecture}

\label{sec:gibbons.hoop.conjecture}

This last section is relevant to Thorne's hoop conjecture in general relativity, \cite{thorne-hoop-conjecture}, \cite{cvetic-gibbons-pope-hoop-conjecture}: \emph{"Horizons form if and only if a mass $m$ gets compacted into a region whose circumference in every direction is $\leq 4 \, \pi \, m$."} This conjecture has been invoked in numerical relativity and in studies of hole scattering in four and higher dimensions, and it has been suggested that it may provide a route to a precise formulation of the idea that there is a minimal length in quantum gravity \cite{cvetic-gibbons-pope-hoop-conjecture}.

The notion of "circumference" in this conjecture is up for interpretation. Reformulating this conjecture in a mathematically precise manner is an issue of mathematical and physical significance. Gibbons posed the following mathematically precise formulation of the conjecture in \cite{gibbons-hoop-conjecture} with the Birkhoff invariant of the horizon playing the role of circumference:

($G$) \emph{"For an outermost marginally trapped surface $(\SS^2, g)$ lying in a Cauchy hypersurface with ADM mass $m_{\operatorname{ADM}}$ on which the Dominant Energy Condition holds, $\beta(g) \leq 4 \, \pi \, m_{\operatorname{ADM}}$."}

The Birkhoff invariant $\beta(g)$ of a sphere $(\SS^2, g)$ is defined to be
\[ \beta(g) = \inf_f \sup_{c \in \RR} \length(f^{-1}(c), g) \text{,} \]
the infimum being taken over all functions $f: \SS^2 \to \RR$ with only two critical points. Intuitively, $\beta(g)$ represents the least length required of a closed elastic rubber band to allow it to be slid over the sphere.

In addition to giving a mathematically precise formulation of the conjecture, \cite{gibbons-hoop-conjecture} shows that ($G$) holds true on general charged rotating Kerr-Newman black holes. More generally ($G$) has been checked to hold true on: the horizons of all four-charged rotating black hole solutions of ungauged supergravity theories, allowing for the presense of a negative cosmological constant; for multi-charged rotating solutions in gauged supergravity; and for the Ernst-Wild static black holes immersed in a magnetic field which are asymptotic to the Melvin solution  \cite{cvetic-gibbons-pope-hoop-conjecture}.

\cite{gibbons-hoop-conjecture} also posed a weaker form of the conjecture that uses the length $\ell(g)$ of the shortest nontrivial geodesic on $(\SS^2, g)$ in place of $\beta(g)$:

($G'$) \emph{"For an outermost marginally trapped surface $(\SS^2, g)$ lying in a Cauchy hypersurface with ADM mass $m_{\operatorname{ADM}}$ on which the Dominant Energy Condition holds, $\ell(g) \leq 4 \, \pi \, m_{\operatorname{ADM}}$."}

This conjecture is strictly weaker since $\beta(g)$ is known to be attained on a geodesic \cite{birkhoff-dynamical-systems}; i.e., $\ell(g) \leq \beta(g)$. It was shown in \cite{gibbons-hoop-conjecture} that ($G'$) always holds on horizons with an antipodal $\ZZ_2$ symmetry, by virtue of Pu's systolic inequality \cite{pu-systolic-inequalities}. Therefore there can be no counterexamples to ($G$) by way of disproving ($G'$) in the presense of such symmetries.

We present a counterexample to ($G'$) and thus also ($G$) that is motivated by a construction with a $\ZZ_3$ symmetry that is originally due to Croke \cite{croke-length-shortest-geodesic} and relies on our optimal Bartnik mass computation in Section \ref{sec:gluing.schwarzschild}.

We first describe a metric on the two sphere with larger Birkhoff invariant than that of a
sphere of the same area. The surface $\Sigma$ is gotten 
by considering the regular hexagon with vertices $2e^\frac{k\pi i}{3}$ for $k=0,1,\ldots, 5$.
It is a fundamental domain for the torus $\Sigma_0=\mathbb R^2/\Lambda$ where $\Lambda$ is the
lattice generated by $\omega_1=2\sqrt{3}e^{-\frac{\pi i}{6}}$ and $\omega_2=2\sqrt{3}e^\frac{\pi i}{6}$.
Now the rotation $R(z)=e^\frac{2\pi i}{3}z$ is well defined on the torus and fixes the 
equivalence classes modulo $\Lambda$ of the three points $0$, $2$, and $2e^\frac{\pi i}{3}$.
We denote the group generated by $R$ as $\mathbb Z_3$ and we let 
$\Sigma=\Sigma_0/\mathbb Z_3$. From the construction we can see that $\Sigma$ may be
thought of as two equilateral triangles of side length $2$ adjacent along a common edge
with the two sides from each of the common vertices identified in the direction from the vertex.
We see that $\Sigma$ is the two sphere with a metric which is flat away from three points
each of which is a cone point with cone angle $\frac{2\pi}{3}$. We have the following result
whose proof we include for completeness of the exposition. Note that the area of $\Sigma$
is equal to $2\sqrt{3}$.

\begin{lemm}
\label{triangle.geodesic.lemma}
Given any $\delta>0$ there exists a smooth approximation $\Sigma_\delta$ to $\Sigma$ with
non-negative curvature (in particular the metric on $\Sigma_\delta$ is in $\sM_+$) and 
such that the length of the shortest nontrivial closed geodesic of 
$\Sigma_\delta$ satisfies  $\ell(\Sigma_\delta) \geq 2 \sqrt{3}-\delta$ while the area satisfies 
$\area(\Sigma_\delta)\leq 2\sqrt{3}+\delta$.
\end{lemm}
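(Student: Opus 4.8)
\emph{Sketch of the approach.} The plan is to exploit the flat torus that covers $\Sigma$. Write $\Sigma = \Sigma_0/\ZZ_3$ as above, so $\Sigma_0 = \RR^2/\Lambda$ is a flat torus and $\pi : \Sigma_0 \to \Sigma$ is a degree-$3$ cover branched over the three cone points, with deck group generated by the rotation $R$ by $2\pi/3$. A short lattice computation shows that the shortest vectors of $\Lambda$ all have length $2\sqrt{3}$, i.e.\ $\operatorname{sys}(\Sigma_0) = 2\sqrt{3}$. I will also use the elementary metric fact that for every $\rho < \sqrt{3}$ the ball $B_\Sigma(C_j, \rho)$ about a cone point $C_j$ is isometric to a flat cone of angle $2\pi/3$ and radius $\rho$ --- a topological disk carrying \emph{none} of the curvature of $\Sigma$; the number $\sqrt{3}$ enters as the distance from a vertex of an equilateral triangle of side $2$ to the opposite side.

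Next I would build $\Sigma_\delta$ by rounding off each of the three cone points. Fixing a small $r_0 > 0$, replace in each cone point the flat cone $B_\Sigma(C_j, r_0) = (\{0 \le r \le r_0\},\, dr^2 + \tfrac19 r^2\, d\theta^2)$ by a smooth rotationally symmetric cap $(\{0 \le r \le L\},\, dr^2 + h(r)^2\, d\theta^2)$ where $h$ is smooth with $h(0) = 0$, $h'(0) = 1$, $h'' \le 0$, $h' > 0$ on $(0, L]$, and $h$ affine of slope $1/3$ (with all higher derivatives vanishing) near $r = L$, arranged so that $h(L) = r_0/3$ and $h'(L) = 1/3$; such an $h$ exists, necessarily with $L \in [r_0/3, r_0]$ and cap curvature $2\pi(1 - h'(L)) = 4\pi/3$. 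The resulting $\Sigma_\delta$ is a smooth metric on $\SS^2$ agreeing with $\Sigma$ away from the three $r_0$-balls, with $K \ge 0$ everywhere (and $K \equiv 0$ near each gluing circle); since $\int_{\SS^2} K\, dA = 4\pi > 0$ forces $K \not\equiv 0$, the maximum principle gives $\lambda_1(-\Delta + K) > 0$, so $\Sigma_\delta \in \sM_+$. Finally $\area(\Sigma_\delta) = 2\sqrt{3} - 3 \cdot \tfrac{\pi r_0^2}{3} + 3\,\area(\text{cap}) \le 2\sqrt{3} + \pi r_0^2 < 2\sqrt{3} + \delta$ once $r_0 < \sqrt{\delta/\pi}$.

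It then remains to prove $\ell(\Sigma_\delta) \ge 2\sqrt{3} - \delta$, i.e.\ that every nonconstant closed geodesic $\gamma$ of $\Sigma_\delta$ has length $\ge 2\sqrt{3} - \delta$; here I would also require $r_0 < \delta/8$. \emph{Case 1: $\gamma$ meets none of the three caps.} Then $\gamma$ lies in the flat region $F = \Sigma \setminus \bigcup_j B_\Sigma(C_j, r_0)$, hence is a closed geodesic of the flat metric, and as $F$ avoids the branch points it lifts under the honest $3$-fold covering $\pi^{-1}(F) \to F$ inside $\Sigma_0$. If the monodromy is trivial a lift $\tilde\gamma$ is a nonconstant closed geodesic of the flat torus $\Sigma_0$, hence non-contractible, whence $\length(\gamma) = \length(\tilde\gamma) \ge \operatorname{sys}(\Sigma_0) = 2\sqrt{3}$. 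If the monodromy has order $3$ the full preimage of $\gamma$ is a single smooth closed geodesic $\Gamma$ of $\Sigma_0$ with $R(\Gamma) = \Gamma$; but a closed geodesic of a flat torus is a straight line of one fixed direction, and no direction is fixed by rotation by $2\pi/3$ --- a contradiction. So Case 1 gives $\length(\gamma) \ge 2\sqrt{3}$. \emph{Case 2: $\gamma$ meets some cap $C_j$.} Suppose $\length(\gamma) < 2\sqrt{3} - \delta$. Since $\gamma$ passes within distance $r_0$ of $C_j$, it lies in $B_{\Sigma_\delta}\!\big(C_j,\, \tfrac12\length(\gamma) + r_0\big) \subseteq B_{\Sigma_\delta}(C_j,\, \sqrt{3} - \tfrac{\delta}{4}) =: W_j$, which is a topological disk --- cap $j$ glued onto a truncated flat cone of angle $2\pi/3$ --- of total curvature $\int_{W_j} K\, dA = 4\pi/3 < 2\pi$. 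But a compact disk $W$ with $K \ge 0$ and $\int_W K\, dA < 2\pi$ contains no nonconstant closed geodesic: running Gauss--Bonnet over the faces into which such a geodesic and its transversal self-intersections cut $W$ shows that the union of the bounded faces --- the region enclosed by the geodesic --- has total curvature $\ge 2\pi$. This contradiction gives $\length(\gamma) \ge 2\sqrt{3} - \delta$ in Case 2 as well, finishing the proof.

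I expect the technical heart to be Case 2, and within it the two geometric inputs: the sharp fact that balls of radius up to $\sqrt{3}$ about the cone points are still flat cones --- so a short closed geodesic touching a smoothed tip is trapped inside a disk containing only that tip's curvature, namely $4\pi/3 < 2\pi$ --- and the Gauss--Bonnet estimate excluding closed geodesics, simple \emph{or} self-intersecting, from a disk of total curvature below $2\pi$. The order-$3$ monodromy exclusion in Case 1 is a second, minor, point that must be handled correctly, since a naive triple-cover argument would only yield $\length(\gamma) \ge \tfrac{2}{3}\sqrt{3}$.
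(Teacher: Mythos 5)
Your construction of $\Sigma_\delta$ (rotationally symmetric caps of total curvature $4\pi/3$ replacing the cone points), the area estimate, the $\sM_+$ verification, and your Case 1 are all fine, and Case 1 is essentially the paper's own argument: lift to the flat torus and rule out nontrivial $\ZZ_3$ monodromy because no direction is fixed by a rotation through $2\pi/3$. Where you genuinely diverge is in the treatment of geodesics meeting a cap: instead of the paper's lifting argument (one cone neighborhood) and distance-$2$ estimate (two or more cone neighborhoods), you trap a short geodesic in the ball $W_j = B_{\Sigma_\delta}(C_j,\sqrt{3}-\delta/4)$, which carries only the $4\pi/3$ of curvature of one cap, and then invoke the claim that \emph{a compact disk with $K \geq 0$ and total curvature $< 2\pi$ contains no nonconstant closed geodesic}. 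That claim is the crux of your Case 2, and the one-line justification you give for it --- ``running Gauss--Bonnet over the faces ... shows that the region enclosed by the geodesic has total curvature $\geq 2\pi$'' --- is not a proof once the geodesic self-intersects. The region enclosed by a self-intersecting geodesic is a piecewise-geodesic domain (in general a cactus of lobes) whose corners at the crossings need not all be reflex: at a crossing where the outer face meets the vertex in two opposite sectors (exactly what happens at the node of a figure-eight), the lobes acquire corners with \emph{positive} exterior angles, so the face-by-face Gauss--Bonnet sum reads $2\pi\cdot(\#\text{lobes}) - (\text{positive corner terms}) + (\text{reflex gains})$ and does not visibly exceed $2\pi$. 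One can check the single-crossing configurations by hand (figure-eight and nested loop both force total curvature $>2\pi$), but the general statement for arbitrarily many crossings requires a genuine argument that you have not supplied; as written this is a gap.

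Fortunately the gap is easy to close inside your own framework, and the repair is exactly the observation the paper makes. Your ball $W_j$ is contained in the region where $\Sigma_\delta$ is rotationally symmetric about $C_j$ (smoothed cap glued to a flat cone annulus; the flat cone structure persists out to radius $\sqrt{3}$ because the injectivity radius of the torus is half the systole, $\sqrt{3}$, and the cone points are at mutual distance $2$). On such a surface of revolution $dr^2 + h(r)^2\,d\phi^2$ with $h' > 0$ one has $\ddot r = h h' \dot\phi^2 \geq 0$ along any unit-speed geodesic, with strict inequality unless the geodesic is radial; equivalently, the squared distance from $C_j$ is strictly convex along geodesics (this is the paper's ``geodesic convexity of the distance function and the maximum principle''). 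A strictly convex function cannot be periodic, so no closed geodesic is contained in $W_j$ at all, and your Case 2 goes through without any appeal to the unproved disk lemma. Alternatively you could follow the paper: geodesics meeting exactly one cap are handled by lifting the flat portion to the torus (the cone point has a unique lift, so the endpoints of the lift nearly coincide and the minimal translation length $2\sqrt{3}$ of $\Lambda$ gives the bound), and geodesics meeting two caps have length at least $4-\delta > 2\sqrt{3}$ since the cone points are distance $2$ apart.
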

\begin{proof}
We first observe that a cone with cone angle $\frac{2\pi}{3}$ can be approximated by a 
smooth metric with non-negative curvature which agrees with the cone metric outside
any chosen ball centered at the vertex. Furthermore we can take the approximating metric
to be rotationally symmetric about a point and such that the square of the distance function
from that point is a strictly convex function. This can be seen easily by isometrically embedding the cone into $\mathbb R^3$ as the graph of the function $z=\alpha\sqrt{x^2+y^2}$ with 
$\alpha=2\sqrt{2}$ chosen so that the cone angle is correct. We can then smooth the function 
keeping it convex and rotationally symmetric. By the Gauss-Bonnet theorem the total curvature of 
a ball whose boundary is in the flat region is $\frac{4\pi}{3}$, and since smaller balls have smaller
total curvature, the Gauss-Bonnet theorem implies that the geodesic curvature of the boundary
of any rotationally symmetric ball is a positive constant. This implies that the square of the 
distance function from the center of symmetry is strictly convex. 

Using the construction above we may approximate the metric defining $\Sigma$ by a metric 
$\hat{g}$ which is smooth, has non-negative curvature, 
and agrees with the metric of $\Sigma$ outside a small neighborhood of the cone points. We
show that if the neighborhood is chosen small enough we have $\area(\hat{g})\leq 2\sqrt{3}+\delta$
and $\ell(\hat{g})\geq 2\sqrt{3}-\delta$. We will then let $\Sigma_\delta=(\mathbb S^2,\hat{g})$. The first of these inequalities is clear since the area of $\Sigma$ is $2\sqrt{3}$. 

We now show that the length of any closed geodesic of $\hat{g}$ is at least $2\sqrt{3}-\delta$
if the neighborhoods are chosen small enough. First we observe that there can be no closed
geodesic lying entirely in the non-flat region near a cone point. This follows from the geodesic
convexity of the distance function and the maximum principle. Let $\gamma$ be a smooth closed
curve which intersects the flat portion of $\hat{g}$ and is a geodesic there. We claim that
$\length(\gamma)\geq 2\sqrt{3}-\delta$ if the neighborhood is chosen small enough. To see
this we consider three cases; first, suppose that $\gamma$ lies entirely in the flat region. In this
case we lift $\gamma$ beginning at some initial point $P$ to the flat torus torus $\Sigma_0$.
The lifted curve $\tilde{\gamma}$ is then a geodesic extending from the chosen lift $\tilde{P}$
to a point which is the image of $\tilde{P}$ under an element of the rotation group $\mathbb Z_3$.
Since $\tilde{\gamma}$ is a geodesic, its tangent vector extends as a parallel vector field on
$\Sigma_0$. Since the non-identity elements of $\mathbb Z_3$ do not fix this parallel vector
field it follows that the curve $\tilde{\gamma}$ is a closed geodesic of $\Sigma_0$ and
thus has length at least $2\sqrt{3}$, the shortest translation distance in the lattice $\Lambda$.
The second case we consider is the case when $\gamma$ intersects precisely one of the
cone point neighborhoods. In this case we again lift $\gamma$ beginning at the neighborhood
of the cone point to a curve $\tilde{\gamma}$ in $\Sigma_0$. Since there is only one
lift of the cone point to $\Sigma_0$ it follows that the initial and final endpoint of the curve 
$\tilde{\gamma}$ lie in a small neighborhood of the same point, and since the minimum
translation distance in $\Lambda$ is $2\sqrt{3}$ we see that the length of $\tilde{\gamma}$ 
must be at least $2\sqrt{3}-\delta$ if
the neighborhood is chosen small enough. Finally we consider the case in which the curve
$\gamma$ intersects more than one cone neighborhood. In this case the length of $\gamma$
is bounded below by $4-\delta>2\sqrt{3}$ if the neighborhoods are chosen small. This is because
the distance between the cone point on $\Sigma$ is equal to $2$ and the curve must have
at least two segments extending from one cone point neighborhood to another since it is a 
closed curve. This completes the proof of Lemma \ref{triangle.geodesic.lemma}.

\end{proof}

We can now prove the main theorem of this section.

\begin{theo}
There exists a complete, asymptotically flat $(M^3, g)$ with non-negative scalar curvature, that is precisely Schwarzschild outside a compact set, whose unique horizon is totally geodesic, strictly stable, and satisfies $\ell(\partial M^3, g) > 4 \, \pi \, m_{\operatorname{ADM}}(M^3, g)$.
\end{theo}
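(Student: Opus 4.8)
\emph{The plan} is to feed Croke's surface, from Lemma \ref{triangle.geodesic.lemma}, into the optimal extension theorem, Theorem \ref{main.theorem}. I would fix a small $\delta > 0$ (to be pinned down at the end) and let $\hat g$ be the metric on $\SS^2$ with $\Sigma_\delta = (\SS^2, \hat g)$ provided by Lemma \ref{triangle.geodesic.lemma}: it has non-negative curvature, hence lies in $\sM_+$, its shortest closed geodesic satisfies $\ell(\SS^2, \hat g) \geq 2\sqrt3 - \delta$, and $\area(\hat g) \leq 2\sqrt3 + \delta$. I would then pick $m > 0$ with $\area(\hat g) < 16\pi m^2 < 2\sqrt3 + 2\delta$ (possible since $\area(\hat g) \leq 2\sqrt3 + \delta$) and apply Theorem \ref{main.theorem} to $(\SS^2, \hat g)$ and this $m$, in the totally geodesic variant recorded in the remarks following Lemma \ref{collar.lemma} and Theorem \ref{main.theorem}. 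This produces a complete, asymptotically flat $(M^3, g)$ with $R_g \geq 0$, equal to a mass-$m$ Schwarzschild end outside a compact set, foliated by mean convex spheres, with $\partial M^3$ minimal, totally geodesic, and isometric to $(\SS^2, \hat g)$. Three of the required properties are then immediate: $m_{\operatorname{ADM}}(M^3, g) = m$ since the end is exactly Schwarzschild; $\partial M^3$ is the unique horizon since the mean convex foliation rules out interior closed minimal surfaces (remark after Theorem \ref{main.theorem}); and $\ell(\partial M^3, g) = \ell(\SS^2, \hat g) \geq 2\sqrt3 - \delta$.

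\emph{For the length inequality}, the choice of $m$ gives $(4\pi\, m_{\operatorname{ADM}}(M^3,g))^2 = (4\pi m)^2 = \pi \cdot (16\pi m^2) < \pi\,(2\sqrt3 + 2\delta)$, while $\ell(\partial M^3, g)^2 \geq (2\sqrt3-\delta)^2$. So $\ell(\partial M^3, g) > 4\pi\, m_{\operatorname{ADM}}(M^3,g)$ holds as soon as $(2\sqrt3 - \delta)^2 \geq \pi\,(2\sqrt3+2\delta)$; at $\delta = 0$ this reads $12 \geq 2\sqrt3\,\pi$, equivalently $2\sqrt3 \geq \pi$, which is true with room to spare, so the strict inequality persists for all small enough $\delta$, and I would fix such a $\delta$. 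This is precisely where Croke's surface is needed: its systolic ratio $\ell(\Sigma_\delta)^2/\area(\Sigma_\delta)$ is, for small $\delta$, close to $2\sqrt3 > \pi$, whereas round spheres realize only $\ell^2/\area = \pi$.

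\emph{For strict stability}, I would argue as follows. Along the totally geodesic boundary $\Sigma = \partial M^3$ the Jacobi operator $-\Delta_\Sigma - (\|S\|^2 + \ricc(\nu,\nu))$ reduces, using $\|S\|^2 = 0$ and the Gauss equation $\ricc(\nu,\nu) = \tfrac12 R_g - K_{\hat g}$, to $-\Delta_{\hat g} + K_{\hat g} - \tfrac12 R_g|_\Sigma$. A neighborhood of $\Sigma$ in $M^3$ is untouched by the gluing and equals the small-$t$ part of the collar $\gamma_\varepsilon$ of Lemma \ref{collar.lemma}; evaluating the scalar curvature identity from the proof of that lemma at $t = 0$, where $\dot g(0) = \ddot g(0) = 0$ in the totally geodesic variant (so $\tr_h \dot h = 0$, $|\dot h|_h = 0$, $\tr_h \ddot h = 4\varepsilon$), gives $R_g|_\Sigma = 2\lambda_1(-\Delta_{\hat g} + K_{\hat g}) - 4\varepsilon\, u(0,\cdot)^{-2}$, which is pointwise strictly less than $2\lambda_1(-\Delta_{\hat g}+K_{\hat g})$. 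Hence the Jacobi operator equals $\big(-\Delta_{\hat g} + K_{\hat g} - \lambda_1(-\Delta_{\hat g}+K_{\hat g})\big) + 2\varepsilon\, u(0,\cdot)^{-2}$: the first summand is a non-negative operator with positive ground state $u(0,\cdot)$, and adding the everywhere-positive potential $2\varepsilon\,u(0,\cdot)^{-2}$ pushes its bottom eigenvalue strictly above $0$, so $\Sigma$ is strictly stable.

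\emph{On the main obstacle.} There is no deep obstacle remaining: the construction is assembly once Lemma \ref{triangle.geodesic.lemma}, Theorem \ref{main.theorem}, and the elementary numerical fact $2\sqrt3 > \pi$ are in hand. The one place that requires an actual computation — and where I would be most careful — is the strict stability claim, namely pinning down the scalar curvature of the totally geodesic collar along its inner boundary as $2\lambda_1(-\Delta_{\hat g}+K_{\hat g}) - 4\varepsilon\,u(0,\cdot)^{-2}$ and comparing it with $2\lambda_1(-\Delta_{\hat g}+K_{\hat g})$.
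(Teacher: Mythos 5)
Your proposal is correct and follows the paper's own route: feed Croke's smoothed bi-equilateral sphere $\Sigma_\delta$ from Lemma \ref{triangle.geodesic.lemma} into Theorem \ref{main.theorem} (in its totally geodesic variant) and use the numerical fact that the systolic ratio $(2\sqrt{3}-\delta)^2/(2\sqrt{3}+\delta) > \pi$ for small $\delta$, choosing $m$ slightly above $\sqrt{\area/16\pi}$. Your explicit verification of strict stability via $R_\gamma|_{t=0} = 2\lambda_1(-\Delta_{\hat g}+K_{\hat g}) - 4\varepsilon\,u(0,\cdot)^{-2}$, which the paper leaves implicit in the remarks following Lemma \ref{collar.lemma} and Theorem \ref{main.theorem}, is accurate and a welcome supplementary detail.
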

\begin{proof}
In view of Theorem \ref{main.theorem} and the remarks immediately following it, all we need to do is construct a metric $g \in \sM_+$ on $\SS^2$ for which
\[ \ell(\SS^2, g) > 4 \, \pi \, m_B(\SS^2, g, H = 0) = 4 \, \pi \, \sqrt{\frac{\area(\SS^2, g)}{16 \, \pi}} \Leftrightarrow \frac{\ell(\SS^2, g)^2}{\area(\SS^2, g)} > \pi \text{.} \]

Let $\delta$ be a small positive number and let $\Sigma_\delta$ be the two sphere with metric constructed in Lemma \ref{triangle.geodesic.lemma}. We then have  
$\area(\Sigma_\delta) \leq 2 \sqrt{3} +\delta$ and that
$\ell(\Sigma_\delta) \geq 2 \sqrt{3}-\delta$. Thus,
\[ \frac{\ell(\Sigma_\delta)^2}{\area(\Sigma_\delta)} \geq \frac{(2\sqrt{3}-\delta)^2}{2\sqrt{3} +\delta} > \pi \text{ when } 0 < \delta \ll 1 \text{.} \]
Since the surfaces $\Sigma_\delta$ are smooth spheres with non-negative curvature, the result 
follows by applying Theorem \ref{main.theorem} to $\Sigma_\delta$, for $0 < \delta \ll 1$.
\end{proof}


\appendix

\section{Smooth families of eigenfunctions of $-\Delta + K$}

\label{app:eigenfunction.smoothness}

A key construction in this article is that of families of eigenfunctions $u(t, \cdot)$ of self adjoint second order linear elliptic operators that depend smoothly on a background metric $g(t)$ that itself depends smoothly on the parameter $t$. Since we're using the functions $u(t, x)$ as a warping factor on a Riemannian manifold, it is important that we know $(t, x) \mapsto u(t, x)$ is smooth. In this appendix we derive this in a general context.

\begin{lemm}
Let $\Omega \subset \RR^d$ be open, and suppose $\Omega \ni p \mapsto g(p)$ is a smooth family of metrics on a compact manifold $\Sigma^n$. Let $g \mapsto L_g$ be a family of self adjoint second order linear elliptic operators whose coefficients depend smoothly on $g$, and whose first eigenvalue $\lambda(g)$ has a one-dimensional eigenspace (with boundary conditions, if necessary). Then $\lambda : \Omega \to \RR$ is smooth, and there exists a smooth map $u : \Omega \times \Sigma \to \RR$ so that $u(p, \cdot)$ is an eigenfunction of $L_{g(p)}$ with eigenvalue $\lambda(g(p))$ (and the same boundary conditions as before if necessary).
\end{lemm}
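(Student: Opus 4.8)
\section*{Proof proposal for the eigenfunction smoothness lemma}

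The plan is to run the implicit function theorem in H\"older spaces, locally near an arbitrary point of $\Omega$, and then patch. Fix $p_0 \in \Omega$, write $g_0 = g(p_0)$, $L_0 = L_{g(p_0)}$, let $\lambda_0 = \lambda(g_0)$ be the first eigenvalue and $u_0 > 0$ the first eigenfunction of $L_0$, normalized by $\int_\Sigma u_0^2 \, dV_{g_0} = 1$ (if boundary conditions are present, take $u_0$ to be the principal eigenfunction, which is still positive). Fix $\alpha \in (0,1)$ and, for each integer $k \geq 2$, consider the map
\[ F_k : \Omega \times C^{k,\alpha}(\Sigma) \times \RR \longrightarrow C^{k-2,\alpha}(\Sigma) \times \RR, \qquad F_k(p, u, \lambda) = \Big( L_{g(p)} u - \lambda \, u, \ \int_\Sigma u \, u_0 \, dV_{g_0} - 1 \Big), \]
where $C^{k,\alpha}(\Sigma)$ is replaced by the closed subspace cut out by the boundary conditions, if any. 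Since the coefficients of $L_{g(p)}$ depend smoothly on $g(p)$ and $p \mapsto g(p)$ is smooth, the map $(p,x) \mapsto (\text{coefficients of } L_{g(p)} \text{ at } x)$ is smooth, and $L_{g(p)} u$ is linear in $u$ with these coefficients; hence $F_k$ is a $C^\infty$ map of Banach spaces, with $F_k(p_0, u_0, \lambda_0) = 0$.

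The crux is that the partial differential $D_{(u,\lambda)} F_k$ at $(p_0, u_0, \lambda_0)$, namely $(v, \mu) \mapsto ( (L_0 - \lambda_0) v - \mu \, u_0, \ \int_\Sigma v \, u_0 \, dV_{g_0} )$, is a Banach-space isomorphism. By standard elliptic theory, $L_0 - \lambda_0 : C^{k,\alpha}(\Sigma) \to C^{k-2,\alpha}(\Sigma)$ is Fredholm of index zero; since $\lambda_0$ is simple and $L_0$ is self adjoint, both its kernel and its $L^2$-cokernel are spanned by $u_0$. Given $(f, a) \in C^{k-2,\alpha}(\Sigma) \times \RR$, the equation $(L_0 - \lambda_0) v = f + \mu \, u_0$ is solvable precisely when $f + \mu \, u_0 \perp u_0$ in $L^2$, which forces $\mu = -\int_\Sigma f \, u_0 \, dV_{g_0}$; with this $\mu$ the solution $v$ exists and is unique up to adding $c \, u_0$, and there is a unique $c$ making $\int_\Sigma v \, u_0 \, dV_{g_0} = a$. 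Thus $D_{(u,\lambda)} F_k$ is a continuous bijection, hence an isomorphism by the open mapping theorem. I expect this isomorphism claim, together with the bookkeeping of which function spaces to use and the verification that $p \mapsto L_{g(p)}$ is smooth as a map of Banach spaces, to be the main technical point of the proof.

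By the implicit function theorem there is a neighborhood $U_k \ni p_0$ and smooth maps $p \mapsto (u_k(p), \lambda_k(p)) \in C^{k,\alpha}(\Sigma) \times \RR$ solving $F_k(p, u_k(p), \lambda_k(p)) = 0$ with $(u_k(p_0), \lambda_k(p_0)) = (u_0, \lambda_0)$. Shrinking $U_k$, the function $u_k(p)$ stays $C^0$-close to $u_0 > 0$, hence is positive, hence is the principal eigenfunction of $L_{g(p)}$, so $\lambda_k(p) = \lambda(g(p))$ is the first eigenvalue. Uniqueness in the implicit function theorem combined with the inclusions $C^{k+1,\alpha} \hookrightarrow C^{k,\alpha}$ shows $u_k = u_{k+1}$ where both are defined and that $U := U_k$ may be taken independent of $k$; write $u(p) := u_k(p)$. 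Thus $p \mapsto u(p)$ is smooth into $C^{k,\alpha}(\Sigma)$ on $U$ for every $k$, and $\lambda = \lambda \circ g$ is smooth on $U$; since $p_0$ was arbitrary, $\lambda$ is smooth on $\Omega$.

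Finally, that $p \mapsto u(p)$ is smooth into $C^{k,\alpha}(\Sigma)$ for every $k$ implies that $u : \Omega \times \Sigma \to \RR$ is jointly smooth: all derivatives $\partial_p^\beta u$ are continuous maps into $C^{k,\alpha}(\Sigma)$ for every $k$, so all mixed partials $\partial_p^\beta \partial_x^\gamma u$ exist and are continuous on $\Omega \times \Sigma$. To globalize, note that on each connected component of $\Omega$ the locally defined positive normalized eigenfunctions agree on overlaps: the first eigenspace is one-dimensional and the constraint (positivity plus $\int_\Sigma u^2 \, dV_{g_0}$ fixed, or any other fixed smooth normalization) pins down the sign, so the local solutions glue to a single smooth $u : \Omega \times \Sigma \to \RR$ with $u(p, \cdot)$ an eigenfunction of $L_{g(p)}$ for the eigenvalue $\lambda(g(p))$. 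This completes the argument; any subsequent renormalization by $\|u(p)\|_{L^2(dV_{g(p)})}$, which is smooth and nonvanishing, preserves smoothness.
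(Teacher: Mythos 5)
Your argument is correct in substance, but it takes a genuinely different route from the paper. The paper isolates the simple first eigenvalue inside a small disc $B_\delta(\lambda(0))\subset\CC$ and uses the resolvent contour integral $\Pi(p,u)=\frac{1}{2\pi i}\int_\gamma(\zeta I-L_{g(p)})^{-1}u\,d\zeta$ as a spectral projection; smoothness of $p\mapsto\Pi(p,u_0)$ is then read off from smoothness of the resolvent as a map of Fr\'echet spaces (citing Hamilton), which handles all orders of regularity in one stroke and never needs an explicit normalization equation. You instead run a Lyapunov--Schmidt/implicit-function-theorem scheme in H\"older spaces: the added scalar constraint $\int u\,u_0\,dV_{g_0}=1$, the Fredholm-alternative verification that $D_{(u,\lambda)}F_k$ is an isomorphism (correct, including the determination of $\mu$ and the adjustment by $c\,u_0$), and the identification of the branch with the first eigenvalue via positivity of $u_k(p)$ are all sound, and this approach is more elementary (only Schauder theory and the Banach IFT) and yields the smoothness of $\lambda$ explicitly, which the paper's proof leaves implicit. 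The price is the bookkeeping over $k$: your one-line claim that the IFT neighborhood ``may be taken independent of $k$'' is the only glib step. It is not actually needed---joint $C^m$ regularity is a local, finite-order statement, so having, for each $m$, some neighborhood of each $p_1$ on which $u$ is smooth into $C^{m,\alpha}$ already gives $u\in C^m$ everywhere and hence $u\in C^\infty$; alternatively one can bootstrap in $p$ by differentiating the eigenvalue equation, or re-center the IFT at $p_1$ and absorb the change of normalization into a smooth positive scalar factor, exactly as in your final renormalization by $\|u(p)\|_{L^2(dV_{g(p)})}$. With that step phrased carefully, your proof is complete and a valid alternative to the paper's projection argument.
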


\begin{rema}
This will be applied with $\Omega$ being a bounded time interval, $L_g = -\Delta_g + K_g$, and $\Sigma^2 \approx \SS^2$. The first eigenspace of $L_g$ is one-dimensional by the maximum principle.
\end{rema}

\begin{proof}
It suffices to prove this for $\Omega$ any sufficiently small neighborhood of the origin in $\RR^d$.

For $\varepsilon > 0$, $\delta > 0$ small enough, $\sigma(L_{g(p)}) \cap B_\delta(\lambda(0)) = \{ \lambda(p) \}$ for all $|p| < \varepsilon$ since we know that spectra depend continuously on the operators and that they're all contained within $\RR$ by self adjointness. For all such $p$ the following contour integral over $\gamma = \partial B_\delta(\lambda(0)) \subset \CC$ makes sense:
\[ \Pi(p, u) = \frac{1}{2\pi i} \int_{\gamma} (\zeta \, I - L_{g(p)})^{-1} \, u \, d\zeta \text{.} \]
This is the spectral projection map onto $\ker (L_{g(p)} - \lambda(g(p)) \, I)$, for all $|p| < \varepsilon$. By elliptic theory this is a map $\Pi : B_\varepsilon(0) \times C^\infty(\Sigma^n; \CC) \to C^\infty(\Sigma^n; \CC)$ that is in fact smooth as a map between Fr\'{e}chet spaces \cite{hamilton-inverse-function-theorem}.

If $u_0 \in \ker (L_{g(0)} - \lambda(0) \, I) \cap C^\infty(\Sigma^n; \CC)$ is $u_0 \not \equiv 0$ then for $p$ small enough so that $\Pi(p, u_0) \not \equiv 0$ the maps $\Pi(p, u_0)$ are all eigenfunctions of $L_{g(p)}$. This family is smooth as a map $B_\eta(p) \times C^\infty(\Sigma^n; \CC) \to C^\infty(\Sigma^n; \CC)$, which in turn yields (the weaker condition of) smoothness as a map $B_\eta(p) \times \Sigma^n \to \CC$. Rescaling smoothly in $p$ (the eigenspaces are one-dimensional) we can get also a smooth real-valued family $B_\eta(p) \times \Sigma^n \to \RR$.
\end{proof}

\bibliographystyle{amsalpha}

\providecommand{\bysame}{\leavevmode\hbox to3em{\hrulefill}\thinspace}
\providecommand{\MR}{\relax\ifhmode\unskip\space\fi MR }
\providecommand{\MRhref}[2]{%
  \href{http://www.ams.org/mathscinet-getitem?mr=#1}{#2}
}
\providecommand{\href}[2]{#2}

\end{document}